\xpatchcmd{\proof}{\itshape}{\bfseries}{}{}
\numberwithin{equation}{section}
\theoremstyle{plain}
\newtheorem{theorem}{Theorem}
\newtheorem{proposition}{Proposition}[section]
\newtheorem{lemma}[proposition]{Lemma}
\newtheorem{corollary}[proposition]{Corollary}
\theoremstyle{definition}
\newtheorem{definition}{Definition}[section]
\theoremstyle{remark}
\newtheorem{remark}[proposition]{Remark}
\newcommand{\R}{\mathbb{R}}
\newcommand{\cH}{\mathcal{H}}
\newcommand{\cM}{\mathcal{M}}
\newcommand{\cN}{\mathcal{N}}
\newcommand{\oset}[3][0ex]{\mathrel{\mathop{#3}\limits^{\vbox to#1{\kern-2\ex@\hbox{\(\scriptstyle#2\)}\vss}}}}
\newcommand\cl[1]{\overline{#1}}
\newcommand{\loc}{\mathrm{loc}}
\newcommand{\weakstartext}{weak\({}^*\)}
\newcommand{\weakstarto}{\oset{\ast}{\rightharpoonup}}
\newcommand{\weakto}{\rightharpoonup}
\newcommand{\defeq}{\vcentcolon=}
\renewcommand{\d}{\mathop{}\!\mathrm{d}}
\DeclarePairedDelimiter{\abs}{\lvert}{\rvert}
\DeclarePairedDelimiter{\norm}{\lVert}{\rVert}
\DeclarePairedDelimiter{\paren}{\lparen}{\rparen}
\DeclarePairedDelimiter{\set}{\lbrace}{\rbrace}
\DeclareMathOperator{\sgn}{sgn}
\title{The Hopf lemma for the Schr\"odinger operator}
\author{Augusto C. Ponce}
\address{
Augusto C. Ponce\hfill\break\indent
Universit{\'e} catholique de Louvain\hfill\break\indent
Institut de Recherche en Math{\'e}matique et Physique\hfill\break\indent
Chemin du Cyclotron 2, bte L7.01.02\hfill\break\indent
1348 Louvain-la-Neuve\hfill\break\indent
Belgium}
\email{Augusto.Ponce@uclouvain.be}
\author{Nicolas Wilmet}
\address{
Nicolas Wilmet\hfill\break\indent
Universit{\'e} catholique de Louvain\hfill\break\indent
Institut de Recherche en Math{\'e}matique et Physique\hfill\break\indent
Chemin du Cyclotron 2, bte L7.01.02\hfill\break\indent
1348 Louvain-la-Neuve\hfill\break\indent
Belgium}
\email{Nicolas.Wilmet@uclouvain.be}
\subjclass[2010]{Primary: 35J10, 35D05, 35G15; Secondary: 35C15}
\keywords{Schr\"{o}dinger operator, Hopf lemma, boundary value problem, singular potential, measure datum}
\begin{document}

\maketitle

\begin{abstract}
\noindent
We prove the Hopf boundary point lemma for solutions of the Dirichlet problem involving the Schr\"odinger operator \(- \Delta + V\) with a nonnegative potential \(V\) which merely belongs to \(L_\loc^1(\Omega)\). 
More precisely, if \(u \in W_0^{1, 2}(\Omega) \cap L^2(\Omega; V \d{x})\) satisfies \(- \Delta u + V u = f\) on \(\Omega\) for some nonnegative datum \(f \in L^\infty(\Omega)\), \(f \not\equiv 0\), then we show that at every point \(a \in \partial\Omega\) where the classical normal derivative \(\partial u(a) / \partial n\) exists and satisfies the Poisson representation formula, one has \(\partial u(a) / \partial n > 0\) if and only if the boundary value problem
\[
\begin{dcases}
\begin{aligned}
- \Delta v + V v &= 0 && \text{in \(\Omega\),} \\
v &= \nu && \text{on \(\partial\Omega\),}
\end{aligned}
\end{dcases}
\]
involving the Dirac measure \(\nu = \delta_a\) has a solution. More generally, we characterize the nonnegative finite Borel measures \(\nu\) on \(\partial\Omega\) for which the boundary value problem above has a solution in terms of the set where the Hopf lemma fails.
\end{abstract}


\section{Introduction and main results}

Let \(\Omega\) be a bounded connected open subset of \(\R^N\) with smooth boundary and let \(V \in L^\infty(\Omega)\) be a nonnegative function. The weak maximum principle ensures that the distributional solution \(u \in C^1(\cl\Omega)\) of the Dirichlet problem
\begin{equation}
\label{eq:dirichlet_problem}
\begin{dcases}
\begin{aligned}
- \Delta u + V u &= f && \text{in \(\Omega\),} \\
u &= 0 && \text{on \(\partial\Omega\),}
\end{aligned}
\end{dcases}
\end{equation}
satisfies \(u \ge 0\) on \(\Omega\) whenever \(f \in L^\infty(\Omega)\) is a nonnegative function; see \Cref{lem:weak_maximum_principle} below. From the minimality of \(u\) on \(\partial\Omega\), the normal derivative of \(u\) with respect to the inward unit normal vector \(n\) thus verifies \(\partial u / \partial n \ge 0\) on \(\partial\Omega\). When \(f \not\equiv 0\), the classical Hopf lemma (see \cite{Evans:2010}*{Lemma~6.4.2} or \cite{GilbargTrudinger:2001}*{Lemma~3.4}) gives the stronger conclusion
\begin{equation}
\label{eq:Hopf}
\frac{\partial u}{\partial n} > 0 \quad \text{on \(\partial\Omega\).}
\end{equation}

Boundedness of \(V\) is an important element to obtain \eqref{eq:Hopf} as it allows one to construct a positive minorant of \(u\) on \(\Omega\) with positive normal derivative at any given point on \(\partial\Omega\).{}
To understand in what respect this assumption on \(V\) can be relaxed, we assume henceforth that
\[
\boxed{
\text{\(V \in L_\loc^1(\Omega)\) and \(V \ge 0\) almost everywhere on \(\Omega\),}
}
\]
but we restrict ourselves to the class of nonnegative data \(f \in L^\infty(\Omega)\). 
In this setting, a solution of \eqref{eq:dirichlet_problem} is a function \(u\) that belongs to \(W_0^{1, 2}(\Omega) \cap L^2(\Omega; V \d{x})\) and satisfies the equation
\[
- \Delta u + V u = f \quad \text{in the sense of distributions in \(\Omega\).}
\]
Observe that \(u\) is the unique minimizer of the energy functional
\[
E(z) = \frac{1}{2} \int_\Omega {} (\abs{\nabla z}^2 + V z^2) \d{x} - \int_\Omega f z \d{x}
\]
with \(z \in W_0^{1, 2}(\Omega) \cap L^2(\Omega; V \d{x})\).

As the solution of \eqref{eq:dirichlet_problem} need not be \(C^1\), nor even continuous, due to some possible singularity from \(V\), we first need to address the pointwise meaning of the normal derivative \(\partial u / \partial n\). 
Since \(u\) is the difference between a continuous and a bounded superharmonic function, every \(x \in \Omega\) is a Lebesgue point and the precise representative of \(u\) satisfies the following representation formula in terms of the Green function \(G\) of \(- \Delta\) on \(\Omega\)~:
\[
\hat{u}(x) = \int_\Omega G(x, y) (- \Delta u(y)) \d{y} \quad \text{for every \(x \in \Omega\).}
\]
Then, from a formal computation, one presumably gets at a point \(a \in \partial\Omega\)~:
\begin{equation}
\label{eq:poisson_integral}
\frac{\partial \hat{u}}{\partial n}(a) = \int_\Omega K(a, y) (- \Delta u(y)) \d{y},
\end{equation}
where \(K \defeq \partial G / \partial n\) denotes the Poisson kernel of \(- \Delta\) on \(\Omega\). This formula can be rigorously justified when \(V \in L^\infty(\Omega)\), and then \(\Delta u \in L^\infty(\Omega)\), using standard estimates on \(G\).

There is no reason why \eqref{eq:poisson_integral} should remain valid in general as we do not assume any particular behaviour of \(V\) near \(\partial\Omega\). 
We show nevertheless that, for any fixed \(V\), there is a common property which is shared by all nontrivial solutions of \eqref{eq:dirichlet_problem} with nonnegative \(f \in L^\infty(\Omega)\). To this end, let \(\zeta_1\) be the solution of \eqref{eq:dirichlet_problem} with constant density \(f \equiv 1\) and define the set
\[{}
\cN = 
\left\{
a \in \partial\Omega{}
\left|{}
\begin{aligned}
\;  & \text{the classical normal derivative \(\partial\widehat{\zeta_{1}}/\partial n\) exists at \(a\)}\\
	& \text{and \eqref{eq:poisson_integral} is valid with \(u = \zeta_{1}\)}
\end{aligned}
\right.
\right\}.
\]
To simplify the notation, we do not explicit the dependence of \(\cN\) on \(V\).

We prove

\begin{theorem}
\label{thm:universal}
For every nonnegative function \(f \in L^\infty(\Omega)\), \(f \not\equiv 0\), the solution \(u\) of \eqref{eq:dirichlet_problem} involving \(f\) has a classical normal derivative at \(a \in \partial\Omega\) that satisfies \eqref{eq:poisson_integral} if and only if \(a \in \cN\).
\end{theorem}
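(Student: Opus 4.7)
The plan is to translate the Poisson representation condition \eqref{eq:poisson_integral} into a convergence property for the Green potential of a non-negative measure on \(\Omega\), and then leverage the additive structure of \eqref{eq:dirichlet_problem}. Decompose \(\hat u = w_f - \psi_u\), where \(w_f(x) = \int_\Omega G(x, y) f(y) \d y\) is the Newtonian potential of \(f\) and \(\psi_u(x) = \int_\Omega G(x, y) V(y) u(y) \d y\) is the Green potential of the non-negative measure \(V u \, \d y\) (which is finite since \(Vu \le \frac{1}{2}Vu^2 + \frac{1}{2}V\) pointwise and \(u \in L^2(\Omega; V \d x)\) together with \(u \le M\zeta_1\) reduce the mass estimate to \(V\zeta_1 \in L^1(\Omega)\)). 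Since \(f \in L^\infty(\Omega)\), classical elliptic regularity gives \(w_f \in C^{1, \alpha}(\cl\Omega)\) and hence \(w_f(a + tn)/t \to \int_\Omega K(a, y) f(y) \d y\) as \(t \to 0^+\) for every \(a \in \partial\Omega\). Consequently, \(u\) satisfies \eqref{eq:poisson_integral} at \(a\) if and only if \(\psi_u(a + tn)/t \to \int_\Omega K(a, y) V(y) u(y) \d y\), a statement that depends only on the measure \(Vu \, \d y\).

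The main technical ingredient is a \emph{splitting lemma}: if \(\mu_1, \mu_2\) are non-negative Borel measures on \(\Omega\) with \(\int_\Omega K(a, y) \, \d(\mu_1 + \mu_2)(y) < \infty\) and \(\psi_{\mu_1 + \mu_2}(a + tn)/t \to \int K(a, y) \, \d(\mu_1 + \mu_2)(y)\) as \(t \to 0^+\), then the same convergence holds for each \(\mu_i\). One applies Fatou's lemma to the pointwise limit \(G(a + tn, y)/t \to K(a, y)\) (with \(y \in \Omega\) fixed) to obtain the lower bound \(\liminf \psi_{\mu_i}(a + tn)/t \ge \int K(a, y) \, \d\mu_i(y)\); summing and comparing with the prescribed convergence of the full sum forces equality in each liminf, and a dual argument on limsups yields the claim. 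From this, the direction \(a \in \cN \Rightarrow u\) satisfies \eqref{eq:poisson_integral} at \(a\) is immediate: setting \(M = \|f\|_\infty\) and letting \(\tilde u\) be the solution of \eqref{eq:dirichlet_problem} with datum \(M - f \ge 0\), by uniqueness \(M \zeta_1 = u + \tilde u\) and hence \(M V \zeta_1 = V u + V \tilde u\); the splitting lemma applied to this identity, under the hypothesis that \(\psi_{V \zeta_1}\) is good at \(a\), yields that \(\psi_u\) is good at \(a\) as well.

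The converse implication is the main obstacle, since the splitting lemma only propagates goodness from a sum to its summands, whereas here we have the summand \(Vu\) good and wish to conclude that the sum \(MV\zeta_1\) is good. The plan is to exploit \(f \not\equiv 0\) through a \emph{reverse} pointwise comparison: since \(f \ge c > 0\) on a set of positive measure in the interior of \(\Omega\), the strong maximum principle combined with an interior Harnack inequality yields \(u \ge c' \zeta_1\) on every compactly contained subdomain, so that \(V \zeta_1 \le (c')^{-1} V u\) there. The splitting lemma then delivers the good property at \(a\) for the restriction of \(V \zeta_1 \, \d y\) to this interior part, while the boundary-layer remainder is controlled through the a priori estimate \(\int_\Omega K(a, y) V(y) \zeta_1(y) \d y \le \int_\Omega K(a,y) \d y < \infty\) (which comes from \(\widehat{\zeta_1} \ge 0\) together with the Fatou lower bound), and by shrinking the layer one obtains goodness of the full measure. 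The central difficulty is to make this layering quantitative up to \(\partial\Omega\), where \(V\) may be singular and the interior Harnack constants can degenerate; an alternative route, in line with the abstract, is to identify \(\cN\) with the set of boundary points at which the Dirac-datum problem \(-\Delta v + V v = 0\) in \(\Omega\), \(v = \delta_a\) on \(\partial\Omega\) admits a solution, a characterisation symmetric in \(u\) and \(\zeta_1\) which yields both directions at once.
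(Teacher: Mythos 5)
Your decomposition \(\hat u = w_f - \psi_u\) and the Fatou-based splitting lemma are both correct, and they give a clean proof of the direction \(a \in \cN \Rightarrow u\) good at \(a\). The splitting lemma (Fatou on each summand plus convergence of the sum forces convergence of each) is a nice replacement for the paper's use of the Brezis--Lieb lemma in \Cref{prop:universal}: the paper first upgrades pointwise plus norm convergence of \(g_k u\) to convergence in \(L^1(\Omega; V\d x)\) and then dominates \(g_k v\) for \(0 \le v \le u\), which is the same monotonicity input repackaged. (A small side remark: the mass finiteness \(Vu \in L^1(\Omega)\) does not follow from \(Vu \le \tfrac12 Vu^2 + \tfrac12 V\), since \(V\) is only in \(L^1_\loc(\Omega)\); it is the absorption estimate \eqref{eq:absorption_estimate} that gives \(\norm{Vu}_{L^1(\Omega)} \le \norm{f}_{L^1(\Omega)}\).)

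The direction you flag as the obstacle, namely \(u\) good at \(a\) \(\Rightarrow a \in \cN\), is indeed where your argument has a genuine gap, and the routes you sketch do not close it. The Harnack idea needs a \emph{global} comparison \(u \ge c'\zeta_1\) on all of \(\Omega\), not just on compact subsets: the splitting lemma requires a pointwise domination of measures, and the measure \(V\zeta_1\,\d y\) can concentrate near \(\partial\Omega\) where the interior Harnack constants degenerate and \(V\) is allowed to be as singular as one likes. Establishing such a boundary Harnack inequality for \(-\Delta + V\) with \(V\in L^1_\loc\) is at least as hard as the theorem itself. The alternative route you propose, identifying \(\cN\) with the set of boundary points where the Dirac-datum problem \eqref{eq:boundary_value_problem_dirac} has a solution, is simply false: that set is \(\partial\Omega\setminus\Sigma\), and \Cref{cor:superquadratic} exhibits potentials for which \(\cN = \partial\Omega\) while \(\partial\hat u/\partial n\equiv 0\) (so \(\Sigma = \partial\Omega\)); thus \(\cN\) and \(\partial\Omega\setminus\Sigma\) differ. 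The relationship between the two sets is the content of \Cref{thm:hopf}, not a tool one can assume.

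What the paper does instead is approximate \(\zeta_1\) from \emph{below} by functions that are dominated by multiples of \(u\). Using a result from \cite{OrsinaPonce:2018} (a Kato-inequality argument), there is a constant \(C\) such that for every \(\varepsilon>0\) the solution \(v_\varepsilon\) of \eqref{eq:dirichlet_problem} with datum \(\chi_{\set{u/C>\varepsilon}}\) satisfies \(v_\varepsilon \le u/\varepsilon\). Then \(\Cref{prop:universal}\) (equivalently, your splitting lemma with the roles reversed) shows each \(\widehat{v_\varepsilon}\) is good at \(a\). By the strong maximum principle \(u>0\) a.e., so \(\chi_{\set{u/C>\varepsilon}}\uparrow 1\) and \(v_\varepsilon \uparrow \zeta_1\); the monotone convergence theorem then identifies \(\lim_{\varepsilon\downarrow 0}\partial\widehat{v_\varepsilon}(a)/\partial n\) with the Poisson integral for \(\zeta_1\), and \Cref{lem:integral_ineq} sandwiches \(\widehat{\zeta_1}(a + \varepsilon n)/\varepsilon\) between this value and its own limsup, forcing the limit to exist and equal the Poisson integral. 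This truncation device is the missing ingredient in your proposal.
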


The set \(\cN\) thus provides one with a common ground where a normal derivative exists, independently of the solution of \eqref{eq:dirichlet_problem}. We can now address the question of whether the Hopf lemma is valid on \(\cN\). We rely on the characterization of the set of points \(a \in \partial\Omega\) for which the boundary value problem
\begin{equation}
\label{eq:boundary_value_problem_dirac}
\begin{dcases}
\begin{aligned}
- \Delta v + V v &= 0 && \text{in \(\Omega\),} \\
v &= \delta_a && \text{on \(\partial\Omega\),}
\end{aligned}
\end{dcases}
\end{equation}
involving the Dirac measure \(\delta_a\) has a distributional solution in the sense that \(v \in L^1(\Omega)\) is such that \(V v \in L^1(\Omega; d_{\partial\Omega} \d{x})\) and satisfies
\begin{equation}
\label{eq:representation_smooth_functions}
\frac{\partial \zeta}{\partial n}(a) = \int_\Omega v (- \Delta \zeta + V \zeta) \d{x}
\end{equation}
for every \(\zeta \in C^\infty(\cl\Omega)\) with \(\zeta = 0\) on \(\partial\Omega\), where \(d_{\partial\Omega} : \Omega \to \R_+\) is the distance to the boundary. 
When the test function \(\zeta\) is non-identically zero and satisfies \(- \Delta \zeta + V \zeta \ge 0\) on \(\Omega\), it follows from \eqref{eq:representation_smooth_functions} and the strong maximum principle for the Schr\"odinger operator with potential in \(L^{1}_{\loc}\) (see \cite{Ancona:1979}*{Th\'eor\`eme~9}, \cite{BrezisPonce:2003}*{Theorem~1} or \cite{Trudinger:1978}) that \(\partial \zeta(a) / \partial n > 0\). It is therefore reasonable to expect the validity of the Hopf lemma for \eqref{eq:dirichlet_problem} on the set of points \(a \in \partial\Omega\) for which the boundary value problem \eqref{eq:boundary_value_problem_dirac} has a solution. This motivates the following

\begin{definition}
\label{def:exceptional_boundary_set}
The \emph{exceptional boundary set} \(\Sigma\) associated to \(- \Delta + V\) is the set of points \(a \in \partial\Omega\) for which the boundary value problem \eqref{eq:boundary_value_problem_dirac} with datum \(\delta_a\) does not have a distributional solution.
\end{definition}

We can now state the Hopf lemma on \(\cN\)\,:

\begin{theorem}
\label{thm:hopf}
Let \(u\) be the solution of \eqref{eq:dirichlet_problem} for some nonnegative datum \(f \in L^\infty(\Omega)\), \(f\not\equiv 0\). For every \(a \in \cN\), we have
\[
\frac{\partial \hat{u}}{\partial n}(a) > 0 \quad \text{if and only if} \quad a \not\in \Sigma.
\]
\end{theorem}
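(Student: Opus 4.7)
My plan begins with the identity
\[
\frac{\partial \hat u}{\partial n}(a) = \int_\Omega v f \d x,
\]
valid whenever \(a \in \cN\) and a solution \(v\) of \eqref{eq:boundary_value_problem_dirac} exists; both directions follow from it quickly. To establish the identity, I would compare \(K(\cdot, a)\), which solves \(-\Delta K(\cdot, a) = 0\) with boundary datum \(\delta_a\), with \(v\), which solves \(-\Delta v + V v = 0\) with the same boundary datum. Their difference \(w = K(\cdot, a) - v\) satisfies \(-\Delta w = V v\) in \(\Omega\) with zero Dirichlet condition, and the Green representation yields
\[
K(y, a) = v(y) + \int_\Omega G(y, z) V(z) v(z) \d z.
\]
Inserting this into \eqref{eq:poisson_integral}, writing \(-\Delta u = f - V u\), and applying Fubini's theorem together with the symmetry of \(G\) and the representation \(\hat u(z) = \int_\Omega G(z, y)(-\Delta u(y)) \d y\), the cross term \(\int_\Omega V v \hat u \d z\) cancels \(\int_\Omega v V u \d y\), leaving the claim. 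The required integrability follows from \(a \in \cN\), from the condition \(V v \in L^1(\Omega; d_{\partial\Omega} \d x)\) appearing in the definition of a solution of \eqref{eq:boundary_value_problem_dirac}, and from the comparison \(u \leq \|f\|_\infty \widehat{\zeta_1}\).

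The forward direction is then immediate: if \(a \notin \Sigma\), the solution \(v\) exists, and the strong maximum principle for \(-\Delta + V\) with \(V \in L^1_{\loc}\) (\cite{Ancona:1979}, \cite{BrezisPonce:2003}, \cite{Trudinger:1978}) gives \(v > 0\) almost everywhere in \(\Omega\), so \(\int_\Omega v f \d x > 0\) because \(f\) is nonnegative and not identically zero. For the converse I would argue by truncation: setting \(V_k = \min\{V, k\}\), the classical solutions \(u_k\) of \eqref{eq:dirichlet_problem} and \(v_k\) of \eqref{eq:boundary_value_problem_dirac} associated with \(V_k\) exist because \(V_k \in L^\infty\), and applying the identity to \(V_k\) gives \(\partial u_k/\partial n(a) = \int_\Omega v_k f \d x\). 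Monotonicity yields \(u_k \downarrow u\) and \(v_k \downarrow v^*\); passing to the limit in \eqref{eq:representation_smooth_functions} shows that \(v^*\), unless identically zero, is a solution of the boundary value problem for the full potential \(V\), contradicting \(a \in \Sigma\). Hence \(v^* \equiv 0\), so \(\int_\Omega v_k f \d x \to 0\) by monotone convergence. Combining with \(\partial u_k/\partial n(a) \to \partial \hat u/\partial n(a)\), which follows by dominated convergence in \eqref{eq:poisson_integral} applied to \(u_k\) and to \(u\) via the decomposition \(V u - V_k u_k = (V - V_k) u + V_k (u - u_k)\), one concludes \(\partial \hat u/\partial n(a) = 0\).

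The principal obstacle is the dominated convergence in the truncation step: one needs a \(k\)-uniform, \(K(a, \cdot)\)-weighted \(L^1\) majorant for \(V_k u_k\). The comparison \(u_k \leq \|f\|_\infty \zeta_1\)-type estimates, together with stability of \(V_k \widehat{\zeta_1}\) under the monotone approximation \(V_k \uparrow V\) and the integrability built into \(a \in \cN\) via Theorem~\ref{thm:universal}, should provide such a majorant, but the details are delicate. The identification of \(v^*\) as a BVP solution when \(v^* \not\equiv 0\) is similarly technical but should follow by the same limiting procedure, relying crucially on the definition of solution via \eqref{eq:representation_smooth_functions} against smooth test functions vanishing on \(\partial\Omega\).
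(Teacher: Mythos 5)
Your identity $\partial\hat u(a)/\partial n = \int_\Omega v f\,\d x$ for $a\in\cN$ and $v$ a distributional solution of \eqref{eq:boundary_value_problem_dirac} is correct, and your Green-function/Fubini derivation of it is an attractive, explicit alternative to the paper's use of duality solutions: the Fubini hypothesis can indeed be verified because $\int_\Omega G(\cdot,y)\abs{f-Vu}(y)\,\d y$ is dominated by $Cd_{\partial\Omega}$ (both $\int G(\cdot,y)f(y)\,\d y$ and $\int G(\cdot,y)Vu(y)\,\d y = \int Gf - \hat u$ are), which pairs precisely with $Vv\in L^1(\Omega;d_{\partial\Omega}\,\d x)$. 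The forward direction of the equivalence then works exactly as you say.

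The converse direction, however, has two genuine gaps. First, the assertion that ``passing to the limit in \eqref{eq:representation_smooth_functions} shows that $v^*$, unless identically zero, is a solution of the boundary value problem'' is false: the absorption term $\int_\Omega V_k v_k\,\zeta\,\d x$ can lose mass to the boundary as $k\to\infty$, so the limit $v^*$ is in general a distributional solution with boundary datum $\delta_a-\lambda$ for some nonnegative measure $\lambda\in\cM(\partial\Omega)$, \emph{not} $\delta_a$ itself. The possibility $v^*\not\equiv 0$ with $a\in\Sigma$ is therefore not directly excluded. Ruling it out is exactly what \Cref{prop:duality_solutions_are_distributional_solutions} and \Cref{prop:characterization_sigma} do, via the inverse maximum principle (\Cref{lem:inverse_maximum_principle}): one shows $\lambda\le\delta_a$, deduces $\lambda=\alpha\delta_a$ with $0\le\alpha\le1$, and argues $\alpha=1$ (else $v^*/(1-\alpha)$ solves \eqref{eq:boundary_value_problem_dirac}), whence $v^*\equiv 0$. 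Without this, your truncation argument does not close.

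Second, the step $\partial u_k/\partial n(a)\to\partial\hat u(a)/\partial n$ is the other missing ingredient, and the dominated-convergence approach you sketch does indeed run into the problem you identify: there is no obvious $k$-uniform $K(a,\cdot)$-weighted majorant for $V_k u_k$. The paper sidesteps this entirely. Instead of proving convergence of the right-hand side of \eqref{eq:poisson_integral} along the approximation, \Cref{lem:integral_ineq} uses a one-sided Fatou argument (which needs no majorant) to establish the chain
\[
\limsup_{\varepsilon\downarrow 0}\frac{\hat u(a+\varepsilon n)}{\varepsilon}
\;\le\;\frac{\widehat{\partial u}}{\partial n}(a)
\;\le\;\int_\Omega K(a,y)(f-Vu)(y)\,\d y,
\]
and for $a\in\cN$ both ends equal $\partial\hat u(a)/\partial n$, forcing $\partial\hat u/\partial n=\widehat{\partial u}/\partial n$ on $\cN$. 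This squeeze is what makes the whole argument rigorous; replacing it by dominated convergence would require precisely the uniform majorant you correctly flag as problematic.
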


In the case where \(V \in L^q(\Omega)\) for some \(q > N\), one has \(\cN = \partial\Omega\) and \(\Sigma = \emptyset\). 
Hence,
\[
\frac{\partial \hat{u}}{\partial n}(a) > 0 \quad \text{for every \(a \in \partial\Omega\)\,;}
\]
see \Cref{cor:hopf_lemma_supercritical} below. As one can expect, the validity of the Hopf lemma depends on the behaviour of \(V\) near the boundary. For example, under the assumption that
\begin{equation}
\label{eq:subquadratic}
V \le C / d_{\partial\Omega}^2 \quad \text{almost everywhere on \(\Omega\)}
\end{equation}
for some constant \(C \ge 0\), Ancona established in \cite{Ancona:2012} (see also the Appendix in \cite{VeronYarur:2012}) a beautiful characterization of the set of points where \eqref{eq:boundary_value_problem_dirac} has a solution: \(a \in \partial\Omega \setminus \Sigma\) if and only if the Poisson kernel of \(- \Delta\) at \(a\) is a supersolution of \eqref{eq:dirichlet_problem}. 
Using his result and the pointwise behaviour of \(K\), we can state the following

\begin{corollary}
Assume that \(V\) satisfies \eqref{eq:subquadratic} and let \(u\) be the solution of \eqref{eq:dirichlet_problem} for some nonnegative datum \(f \in L^\infty(\Omega)\), \(f \not\equiv 0\). For every \(a \in \cN\), we have
\[
\frac{\partial \hat{u}}{\partial n}(a) > 0 \quad \text{if and only if} \quad \int_\Omega \frac{d_{\partial\Omega}^2(y)}{\abs{y - a}^N} V(y) \d{y} < + \infty.
\]
\end{corollary}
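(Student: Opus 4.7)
The plan is to chain three equivalences. First, since \(a \in \cN\), \Cref{thm:hopf} gives \(\partial \hat u(a)/\partial n > 0\) if and only if \(a \notin \Sigma\), so the whole claim reduces to showing
\[
a \notin \Sigma \iff \int_\Omega \frac{d_{\partial\Omega}^2(y)}{\abs{y-a}^N}V(y)\d y < +\infty.
\]
Next, under the subquadratic assumption \eqref{eq:subquadratic}, I invoke the characterization of Ancona \cite{Ancona:2012} (see also the Appendix of \cite{VeronYarur:2012}) quoted above: \(a \notin \Sigma\) precisely when the Poisson kernel \(K(\cdot, a)\) of \(- \Delta\) at \(a\) is a supersolution of the Schr\"odinger equation \eqref{eq:dirichlet_problem}. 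Since \(K(\cdot, a)\) is harmonic throughout \(\Omega\) and vanishes on \(\partial\Omega\setminus\{a\}\), the non-trivial content of that supersolution condition is the integrability \(VK(\cdot, a) \in L^1(\Omega; d_{\partial\Omega}\d x)\), which is the natural weighted class forced by the measure-datum formulation \eqref{eq:representation_smooth_functions}.

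Second, to turn this weighted integrability into the integral appearing in the statement, I would use the classical two-sided estimate for the Poisson kernel on a smooth bounded domain,
\[
K(y, a) \asymp \frac{d_{\partial\Omega}(y)}{\abs{y - a}^N},
\]
uniformly in \(y \in \Omega\) and \(a \in \partial\Omega\). Multiplying through by \(V(y)\,d_{\partial\Omega}(y)\) and integrating gives
\[
\int_\Omega V(y)\,K(y,a)\,d_{\partial\Omega}(y)\d y \asymp \int_\Omega \frac{d_{\partial\Omega}^2(y)}{\abs{y-a}^N}V(y)\d y,
\]
so finiteness of one side is equivalent to finiteness of the other, which closes the chain.

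The only delicate step is the identification of ``\(K(\cdot, a)\) is a supersolution of \eqref{eq:dirichlet_problem}'' with the weighted integrability \(VK(\cdot, a) \in L^1(\Omega; d_{\partial\Omega}\d x)\); this is where the bound \eqref{eq:subquadratic} plays its decisive role, as it is the very hypothesis that makes Ancona's characterization available and prevents \(V\) from producing boundary pathologies that would decouple the supersolution property from this \(L^1\)-class. Everything else---the application of \Cref{thm:hopf} and the pointwise estimate on \(K\)---is classical.
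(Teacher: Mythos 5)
Your proof is correct and follows the same route the paper indicates: reduce to \(a \notin \Sigma\) via \Cref{thm:hopf}, invoke Ancona's characterization of \(\Sigma\) under the subquadratic bound \eqref{eq:subquadratic}, and convert the resulting integrability condition \(VK(\cdot,a) \in L^1(\Omega; d_{\partial\Omega}\,\mathrm{d}x)\) into the stated integral via the two-sided Poisson kernel estimate \(K(y,a) \asymp d_{\partial\Omega}(y)/\abs{y-a}^N\). This matches the paper's brief remark that the corollary follows from \Cref{thm:hopf}, Ancona's result, and the pointwise behaviour of \(K\).
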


Quadratic blow-up of the potential as in \eqref{eq:subquadratic} is a threshold for the validity of the Hopf lemma. More precisely,

\begin{corollary}
\label{cor:superquadratic}
Assume that \(V\) satisfies
\[
V \ge C' / d_{\partial\Omega}^2 \quad \text{almost everywhere on \(\Omega\)}
\]
for some \(C' > 0\). Then \(\cN = \partial\Omega\) and, for every solution \(u\) of \eqref{eq:dirichlet_problem} with \(f \in L^\infty(\Omega)\), we have
\[
\frac{\partial \hat{u}}{\partial n} = 0 \quad \text{on \(\partial\Omega\).}
\]
\end{corollary}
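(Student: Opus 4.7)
The plan is to combine a barrier argument with the Green's-function representation of \(\hat u\) recalled in the introduction. The lower bound \(V \ge C'/d_{\partial\Omega}^{\,2}\) turns \(d_{\partial\Omega}^{\,\alpha}\), for any \(\alpha \in (1,2)\) with \(\alpha(\alpha-1) < C'\), into a genuine supersolution of \(-\Delta + V\) near the boundary; this forces \(u\) to vanish strictly faster than linearly, which simultaneously kills the classical normal derivative and the Poisson integral.

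First, I fix such an \(\alpha\). On a tubular neighborhood \(\{d_{\partial\Omega} < \delta\}\) where \(d \defeq d_{\partial\Omega}\) is smooth with \(\abs{\nabla d} = 1\) and \(\Delta d\) bounded, a direct computation gives
\[
-\Delta(d^\alpha) + V d^\alpha \ge \bigl(C' - \alpha(\alpha-1)\bigr) d^{\alpha-2} - \alpha \norm{\Delta d}_{L^\infty} d^{\alpha-1} \ge c_0\, d^{\alpha-2}
\]
for some \(c_0 > 0\), once \(\delta\) is sufficiently small. Since \(u \in L^\infty(\Omega)\) (comparison with the harmonic-only solution for \(f\) gives \(u \le u_0\)), one may choose \(M > 0\) large enough so that \(M d^\alpha \ge u\) on the inner boundary \(\{d = \delta\}\) and \(M\bigl(-\Delta(d^\alpha) + V d^\alpha\bigr) \ge f\) throughout the tube; both requirements are compatible thanks to the blow-up of \(d^{\alpha-2}\) at \(\partial\Omega\). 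The weak maximum principle for \(-\Delta + V\) (\Cref{lem:weak_maximum_principle}) then yields \(u \le M d^\alpha\) on \(\{d < \delta\}\).

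For every \(a \in \partial\Omega\) and every sufficiently small \(t > 0\), the point \(a + tn\) lies in \(\Omega\) with \(d(a + tn) = t\), so the above gives \(\hat u(a + tn)/t \le M t^{\alpha-1} \to 0\) as \(t \to 0^+\). Combined with \(\hat u \ge 0\), this shows that the classical normal derivative \(\partial \hat u(a)/\partial n\) exists and equals \(0\), which is the last assertion of the corollary.

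To obtain \(\cN = \partial\Omega\), I pass to the limit \(t \to 0^+\) in the representation \(\hat u(x) = \int_\Omega G(x,y)(-\Delta u(y))\d y\) applied at \(x = a + tn\), with \(-\Delta u = f - V u\), after dividing by \(t\). The left-hand side tends to \(0\), while the \(f\)-term converges to \(\int_\Omega K(a,y) f(y) \d y\) by the classical Poisson boundary limit for bounded data. Since \((G(a+tn,y)/t)\,V(y) u(y) \ge 0\), Fatou yields \(\int_\Omega K V u \le \int_\Omega K f < \infty\), and a standard Hopf-type boundary estimate of the form \(G(a+tn,y) \le C\, t\, K(a,y)\) for the harmonic function \(G(\cdot,y)\) near \(a\) upgrades this to dominated convergence, forcing the equality \(\int K V u = \int K f\). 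Rearranged, this is precisely \(\int_\Omega K(a,y)(-\Delta u(y))\d y = 0 = \partial \hat u(a)/\partial n\), i.e.~\eqref{eq:poisson_integral}; taking \(u = \zeta_1\) places every \(a \in \partial\Omega\) in \(\cN\). The main technical subtlety I anticipate is exactly this domination step, because \(V u\) admits no obvious majorant integrable against \(K\); it is the combination of the a posteriori finiteness produced by Fatou and the boundary-Harnack/Hopf-type bound on \(G\) that ultimately delivers the equality.
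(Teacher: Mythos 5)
Your barrier construction is a genuinely different route from the paper's: the paper simply invokes a result of D\'iaz~\cite{Diaz:2017} which furnishes a bounded eigenfunction of \(-\Delta+V\) whose classical normal derivative vanishes everywhere on \(\partial\Omega\) and then transfers this via \Cref{thm:universal}, whereas you build the supersolution \(M d^\alpha\) directly from the lower bound \(V\ge C'/d_{\partial\Omega}^2\). That part of your argument is sound (modulo routine bookkeeping: one must split \(f\) into its positive and negative parts, and the comparison on the tube \(\{d_{\partial\Omega}<\delta\}\) should be justified by Kato's inequality rather than by \Cref{lem:weak_maximum_principle} as stated, since the latter is formulated on all of \(\Omega\) with zero boundary data). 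It cleanly yields \(|\hat u(a+tn)|\le M t^\alpha\) and hence \(\partial\hat u(a)/\partial n=0\) at every \(a\in\partial\Omega\).

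The gap is in the step \(\cN=\partial\Omega\). Membership in \(\cN\) requires not only that \(\partial\widehat{\zeta_1}/\partial n(a)\) exists, but also that the Poisson identity~\eqref{eq:poisson_integral} holds, i.e.\ \(\int_\Omega K(a,y)\bigl(1-V\zeta_1(y)\bigr)\,\d y = 0\). Your Fatou step correctly gives \(\int_\Omega K\,V\zeta_1\le\int_\Omega K\), but the proposed upgrade to equality rests on the pointwise bound \(G(a+tn,y)\le C\,t\,K(a,y)\), and this estimate is false near the pole \(y=a+tn\): there \(G(a+tn,y)\sim|y-(a+tn)|^{2-N}\) blows up while \(t\,K(a,y)\sim t^{2-N}\) stays finite. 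Consequently there is no dominating function, and Fatou alone leaves open the possibility of mass of \((G(a+tn,\cdot)/t)\,V\zeta_1\) escaping towards \(a\) as \(t\downarrow 0\), in which case \(\int_\Omega K(a,y)(1-V\zeta_1)\,\d y\) would be strictly positive and \(a\notin\cN\). Nothing in the barrier bound \(\zeta_1\le M d^\alpha\) (which only controls \(\zeta_1\), not \(V\)) rules this out: the local contribution \(\int_{B(a+tn,t/2)}\frac{G(a+tn,y)}{t}V(y)\zeta_1(y)\,\d y\) is of order \(t^{\alpha-1}\int_{B(a+tn,t/2)}|y-(a+tn)|^{2-N}V(y)\,\d y\) and one has no upper bound on \(V\) in that shrinking ball. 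So the identity~\eqref{eq:poisson_integral} is not established, and the conclusion \(\cN=\partial\Omega\) remains unproved by this route. This is precisely where the paper instead falls back on D\'iaz's construction (used together with \Cref{thm:universal}) rather than attempting a direct passage to the limit in the Green representation.
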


\Cref{cor:superquadratic} is a consequence of our \Cref{thm:universal} and a result from D\'iaz~\cite{Diaz:2017} which establishes the existence of a bounded nonnegative eigenfunction \(u\) for \(- \Delta + V\) that satisfies a Dirichlet problem of the type \eqref{eq:dirichlet_problem} and such that \(\partial \hat{u}(a) / \partial n = 0\) for every \(a \in \partial\Omega\).

Although we have introduced the exceptional set \(\Sigma\) by dealing with Dirac masses on \(\partial\Omega\), the set \(\Sigma\) allows one to characterize all nonnegative finite Borel measures \(\nu\) on \(\partial\Omega\) for which the boundary value problem
\begin{equation}
\label{eq:boundary_value_problem}
\begin{dcases}
\begin{aligned}
- \Delta v + V v &= 0 && \text{in \(\Omega\),} \\
v &= \nu && \text{on \(\partial\Omega\),}
\end{aligned}
\end{dcases}
\end{equation}
has a distributional solution. This is the content of our next theorem that extends a previous result by V\'eron and Yarur~\cite{VeronYarur:2012}:

\begin{theorem}
\label{thm:characterization_good_measures}
The boundary value problem \eqref{eq:boundary_value_problem} associated to a nonnegative finite Borel measure \(\nu\) on \(\partial\Omega\) has a distributional solution if and only if \(\nu(\Sigma) = 0\).
\end{theorem}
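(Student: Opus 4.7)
The plan is to prove the two directions separately, sufficiency via superposition of Dirac-mass solutions and necessity via stability properties of the cone of good measures.

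\emph{Sufficiency.} For each \(a \in \partial\Omega \setminus \Sigma\), let \(v_a \geq 0\) denote the solution of \eqref{eq:boundary_value_problem_dirac}; comparison with the Poisson kernel \(K(\cdot, a)\) via the maximum principle gives \(v_a \leq K(\cdot, a)\). Let \(\zeta_\ast \in C^\infty(\cl\Omega)\) solve \(-\Delta\zeta_\ast = 1\) in \(\Omega\) with \(\zeta_\ast = 0\) on \(\partial\Omega\); standard estimates give \(c\, d_{\partial\Omega} \leq \zeta_\ast \leq C\, d_{\partial\Omega}\) and \(\partial\zeta_\ast/\partial n\) bounded on \(\partial\Omega\). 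Plugging \(\zeta = \zeta_\ast\) into \eqref{eq:representation_smooth_functions} then yields the uniform estimate
\[
\int_\Omega v_a \, \d x + \int_\Omega V v_a \, d_{\partial\Omega} \, \d x \leq C_0 \quad \text{for every } a \in \partial\Omega \setminus \Sigma.
\]
Setting \(v(x) \defeq \int_{\partial\Omega \setminus \Sigma} v_a(x) \, \d\nu(a)\), which is meaningful since \(\nu(\Sigma) = 0\), Fubini's theorem gives \(v \in L^1(\Omega)\), \(V v \in L^1(\Omega; d_{\partial\Omega} \, \d x)\) and, for every \(\zeta \in C^\infty(\cl\Omega)\) vanishing on \(\partial\Omega\),
\[
\int_\Omega v(-\Delta\zeta + V\zeta) \, \d x = \int_{\partial\Omega \setminus \Sigma} \frac{\partial\zeta}{\partial n}(a) \, \d\nu(a) = \int_{\partial\Omega} \frac{\partial\zeta}{\partial n} \, \d\nu,
\]
the interchange being justified by \(|\zeta| \leq \|\nabla\zeta\|_\infty d_{\partial\Omega}\) combined with the estimates above.

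\emph{Necessity.} The strategy is to establish two stability properties of the cone \(\mathcal{G}^+\) of nonnegative good measures: the \emph{domination property} (if \(0 \leq \mu \leq \nu\) and \(\nu \in \mathcal{G}^+\), then \(\mu \in \mathcal{G}^+\)) and the \emph{concentration property} (no nonzero element of \(\mathcal{G}^+\) is concentrated on \(\Sigma\)). Granted both, the restriction \(\nu|_\Sigma \leq \nu\) belongs to \(\mathcal{G}^+\) by domination and hence is zero by concentration, proving \(\nu(\Sigma) = 0\). I would prove domination by approximating \(\mu\) from below by measures \(\rho_n \, \d\sigma\) with \(\rho_n \in L^\infty(\partial\Omega)\) and \(0 \leq \rho_n \, \d\sigma \leq \nu\): standard linear theory gives solutions \(w_n \geq 0\), which satisfy \(w_n \leq v_\nu\) by comparison, while testing against \(\zeta_\ast\) as above provides uniform \(L^1\)-bounds on \(w_n\) and \(V w_n d_{\partial\Omega}\); a weak limit \(w\) then solves the problem with datum \(\mu\).

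The main obstacle is the concentration property. One route is a disintegration/Lebesgue-density argument: if \(\mu \in \mathcal{G}^+ \setminus \{0\}\) is concentrated on \(\Sigma\), inner regularity produces a compact \(K \subset \Sigma\) with \(\mu(K) > 0\); restricting \(\mu\) to shrinking neighborhoods of a density point \(a \in K\), renormalizing and passing to the limit --- with the torsion estimate providing tightness --- should produce a solution of \eqref{eq:boundary_value_problem_dirac} for \(\delta_a\), contradicting \(a \in \Sigma\). A more robust alternative, in the spirit of Brezis--Marcus--Ponce reduced measures and of V\'eron--Yarur, would be to introduce a capacity associated with \(-\Delta + V\), show that good measures do not charge sets of vanishing capacity, and verify that \(\Sigma\) itself has zero capacity. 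Either route hinges on a delicate compactness argument, which I anticipate to be the most technically demanding step of the proof.
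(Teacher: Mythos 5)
Your sufficiency argument is a genuinely different route from the paper's. The paper takes the duality solution \(v\) associated to \(\nu\), applies \Cref{prop:duality_solutions_are_distributional_solutions} to find a ``defect'' measure \(\lambda\ge 0\) so that \(v\) is a distributional solution for \(\nu-\lambda\), and then kills \(\lambda\) by combining the inverse maximum principle (\Cref{lem:inverse_maximum_principle}, giving \(\lambda\le\nu\), hence \(\lambda(\Sigma)=0\)) with \Cref{prop:characterization_sigma} (giving \(\lambda(\partial\Omega\setminus\Sigma)=0\)). Your superposition \(v(x)=\int_{\partial\Omega\setminus\Sigma}v_a(x)\,\d\nu(a)\) is more explicit, and the Fubini step is indeed controllable via the uniform bound on \(\norm{v_a}_{L^1(\Omega)}+\norm{Vv_a}_{L^1(\Omega;d_{\partial\Omega}\d x)}\) (this is \Cref{lem:L1_estimate}) together with \(\abs{\zeta}\lesssim d_{\partial\Omega}\). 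What you should not gloss over is the measurability of \(a\mapsto v_a\); it does hold because \(v_a=P_a\) is the duality solution, which is weakly measurable in \(a\) by the representation formula \eqref{eq:representation_formula} and hence strongly measurable by Pettis, but that identification is precisely where the paper's duality machinery re-enters through the back door.

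The necessity direction, however, has a genuine gap. Your ``concentration property'' --- that no nonzero nonnegative good measure is concentrated on \(\Sigma\) --- is not proved; it is the whole content of the implication. The two routes you sketch are themselves the difficulty. The blow-up/Lebesgue-density idea requires producing, from a good measure charging a compact \(K\subset\Sigma\), a distributional solution for \(\delta_a\) at some \(a\in K\); that limiting procedure would need uniform control of the absorption term \(Vv\) near \(a\), and there is no mechanism in your outline to rule out that mass of the approximating measures leaks into a boundary defect --- which is exactly the phenomenon \Cref{prop:duality_solutions_are_distributional_solutions} is designed to quantify via the measure \(\lambda\). The capacity route is essentially the V\'eron--Yarur approach that the paper explicitly set out to bypass, and it requires \(V\in L^\infty_{\loc}(\Omega)\) in their formulation; making it work for \(V\in L^1_{\loc}(\Omega)\) would itself be a nontrivial contribution, not a ``step.'' The paper's actual argument is shorter and avoids both routes: a distributional solution \(v\) for \(\nu\) is automatically a duality solution (\Cref{prop:distributional_solutions_are_duality_solutions}); since \(\widehat{\partial\zeta_f}/\partial n=0\) on \(\Sigma\) by \Cref{prop:characterization_sigma}, \(v\) is simultaneously the duality solution for \(\nu\lfloor_{\partial\Omega\setminus\Sigma}\); the already-proved (\(\Leftarrow\)) direction plus uniqueness of duality solutions then force \(v\) to be a distributional solution for both \(\nu\) and \(\nu\lfloor_{\partial\Omega\setminus\Sigma}\), whence \(\nu\lfloor_\Sigma=0\). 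The missing ingredient in your proposal is precisely this pair of facts: that distributional solutions are duality solutions, and that \(\widehat{\partial\zeta_f}/\partial n\) vanishes identically on \(\Sigma\). Without them, ``concentration'' remains an unproven assertion.

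Separately, the domination step as sketched also needs care: weak \(L^1\) limits do not pass to the limit in \(\int_\Omega Vw_n\zeta\,\d x\) unless you have a.e.\ convergence, which you would get by choosing \(\rho_n\) nondecreasing (then \(w_n\) is nondecreasing by comparison); you should say this explicitly.
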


The proof of \Cref{thm:characterization_good_measures} is inspired by the recent paper of Orsina and the first author \cite{OrsinaPonce:2019} concerning the failure of the strong maximum principle for the Schr\"odinger operator \(- \Delta + V\) in the case where \(V\) is merely a nonnegative Borel measurable function. 
In this respect, we introduce in \Cref{sec:pointwise_normal_derivative} a notion of pointwise normal derivative for solutions of \eqref{eq:dirichlet_problem} that is defined everywhere on \(\partial\Omega\), but possibly depends on the potential \(V\). 
In \Cref{sec:boundary_value_problem}, we present a counterpart for \eqref{eq:boundary_value_problem} of the notion of duality solution introduced by Malusa and Orsina \cite{MalusaOrsina:1996}. The exceptional boundary set \(\Sigma\) is then identified in \Cref{sec:normal_derivative_on_sigma} with the set of boundary points at which all such normal derivatives vanish.
Using the tools developed in \Cref{sec:boundary_value_problem,sec:normal_derivative_on_sigma}, we prove \Cref{thm:characterization_good_measures} in \Cref{sec:proof_characterization_good_measures}. 
\Cref{thm:universal,thm:hopf} are proved in \Cref{sec:universal,sec:proof_hopf}, respectively.

\section{Pointwise normal derivative associated to the Schr\"odinger operator}
\label{sec:pointwise_normal_derivative}

A property that is common to all solutions of \eqref{eq:dirichlet_problem} concerns the existence of a distributional normal derivative as an element in \(L^{1}(\partial\Omega)\).{}
This is a general feature that relies on the facts that
\[{}
u \in W_{0}^{1, 1}(\Omega){}
\quad \text{and} \quad{}
\Delta u \text{ is a finite Borel measure on \(\Omega\).}
\]
Brezis and the first author proved in \cite{BrezisPonce:2008} that in this general setting there exists a function in \(L^1(\partial\Omega)\), which is denoted by \(\partial u / \partial n\) and coincides with the classical normal derivative when \(u\) is a \(C^{2}\)~function, that satisfies
\[
\norm[\bigg]{\frac{\partial u}{\partial n}}_{L^1(\partial\Omega)} \le \abs{\Delta u}(\Omega)
\]
and
\begin{equation}
\label{eq:distributional_normal_derivative_identity}
\int_\Omega \nabla u \cdot \nabla \psi \d{x} = - \int_\Omega \psi \Delta u  - \int_{\partial\Omega} \frac{\partial u}{\partial n} \psi \d\sigma \quad \text{for every \(\psi \in C^\infty(\cl\Omega)\),}
\end{equation}
where \(\sigma = \cH^{N - 1}\lfloor_{\partial\Omega}\) is the surface measure on \(\partial\Omega\)\,; see \cite{BrezisPonce:2008}*{Theorem~1.2} or \cite{Ponce:2016}*{Proposition~7.3}. We recall that \(n\) is the inward unit normal vector, which explains the minus sign in front of the second integral in the right-hand side of \eqref{eq:distributional_normal_derivative_identity}. When \(u \ge 0\) almost everywhere on \(\Omega\), one additionally has
\begin{equation}
\label{eq:distributional_normal_derivative_nonnegativity}
\frac{\partial u}{\partial n} \ge 0 \quad \text{almost everywhere on \(\partial\Omega\)\,;}
\end{equation}
see \cite{BrezisPonce:2008}*{Corollary~6.1} or \cite{Ponce:2016}*{Lemma~12.15}. Since the mapping
\[
\set[\big]{u \in W_0^{1, 1}(\Omega) : \Delta u \in L^1(\Omega)} \to L^1(\partial\Omega) : u \mapsto \frac{\partial u}{\partial n}
\]
is linear, such a property yields a handy comparison principle: 
if \(v\) and \(w\) both satisfy \eqref{eq:dirichlet_problem}, with possibly different potentials \(V\) and data \(f\), and if \(v \le w\) almost everywhere on \(\Omega\), then
\[
\frac{\partial v}{\partial n} \le \frac{\partial w}{\partial n} \quad \text{almost everywhere on \(\partial\Omega\).}
\]

More specific to solutions of \eqref{eq:dirichlet_problem}, we show that there is a notion of \emph{pointwise} normal derivative that is adapted to the Schr\"odinger operator \(- \Delta + V\) and used in the proofs of \Cref{thm:universal,thm:hopf,thm:characterization_good_measures}. For this purpose, let \((V_k)\) be a nondecreasing sequence of nonnegative functions in \(L^\infty(\Omega)\) that converges almost everywhere to \(V\) on \(\Omega\). The construction of this pointwise normal derivative relies on the main result of this section which is

\begin{proposition}
\label{prop:pointwise_normal_derivative}
Let \(u\) be the solution of \eqref{eq:dirichlet_problem} associated to \(f \in L^\infty(\Omega)\) and denote by \(u_k\) the solution of
\begin{equation}
\label{eq:dirichle_proposition_pointwise}
\begin{dcases}
\begin{aligned}
- \Delta u_k + V_k u_k &= f && \text{in \(\Omega\),} \\
u_k &= 0 && \text{on \(\partial\Omega\).}
\end{aligned}
\end{dcases}
\end{equation}
Then
\begin{enumerate}[(i)]
\item \(u_k \to u\) and \(V_k u_k \to V u\) in \(L^1(\Omega)\) and almost everywhere on \(\Omega\)\,;
\item \(\paren{\partial u_k / \partial n}\) is uniformly bounded on \(\partial\Omega\)\,;
\item \(\paren{\partial u_k / \partial n}\) converges pointwise to a function \(g : \partial\Omega \to \R\) such that \(g = \partial u / \partial n\) almost everywhere on \(\partial\Omega\) and, for every \(N < p \le \infty\),
\[
\norm{g}_{L^\infty(\partial\Omega)} \le C \norm{f}_{L^p(\Omega)}
\]
with a constant \(C > 0\) depending on \(p\) and \(\Omega\). Moreover, \(g \ge 0\) on \(\partial\Omega\) whenever \(f \ge 0\) almost everywhere on \(\Omega\).
\end{enumerate}
\end{proposition}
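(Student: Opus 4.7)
My approach combines the monotonicity of \((V_k)\) via the weak maximum principle, elliptic regularity for each \(V_k\)-problem, and the continuity of the distributional normal derivative established in~\cite{BrezisPonce:2008}. The main technical point is the \(L^1\) convergence of \(V_k u_k\), for which no fixed \(L^1\)-dominating function is available in general, since \(V \in L^1_{\loc}(\Omega)\) does not imply \(V d_{\partial\Omega} \in L^1(\Omega)\).

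For~(ii), let \(w \in C^{1,\alpha}(\cl\Omega)\) solve \(-\Delta w = \abs{f}\) with \(w = 0\) on \(\partial\Omega\). Since \(V_k \ge 0\), comparing \(\pm u_k\) with \(w\) via the maximum principle gives \(\abs{u_k} \le w\) in \(\Omega\). Because \(V_k \in L^\infty(\Omega)\), elliptic regularity yields \(u_k \in W^{2,p}(\Omega) \subset C^{1,\alpha}(\cl\Omega)\) for every finite \(p\), so the classical normal derivative is defined everywhere on \(\partial\Omega\), and the inequality \(\abs{u_k} \le w\) with common vanishing on \(\partial\Omega\) forces \(\abs{\partial u_k(a)/\partial n} \le \partial w(a)/\partial n\) at each \(a\); standard \(L^p\) theory bounds \(\norm{\partial w/\partial n}_{L^\infty(\partial\Omega)}\) by \(C_p \norm{f}_{L^p(\Omega)}\) for any \(p > N\).

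For~(i), I reduce to \(f \ge 0\) by linearity. The weak maximum principle applied to \(u_k - u_{k+1}\), which satisfies \(-\Delta(u_k - u_{k+1}) + V_k(u_k - u_{k+1}) = (V_{k+1} - V_k) u_{k+1} \ge 0\) with zero boundary data, gives \(u_k \ge u_{k+1} \ge 0\), so \(u_k \searrow u^{\ast}\) pointwise. The bound \(E_k(u_k) \le E_k(0) = 0\) makes \((u_k)\) bounded in \(W_0^{1,2}(\Omega)\) and \((V_k u_k^2)\) bounded in \(L^1(\Omega)\), so passing to the limit in the weak formulation against \(\varphi \in C_c^\infty(\Omega)\) (legitimate because \(V_k u_k \le V w \in L^1_{\loc}(\Omega)\) by~(ii)) identifies \(u^{\ast}\) as a solution of \eqref{eq:dirichlet_problem}; by uniqueness of the minimizer, \(u^{\ast} = u\). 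For the \(L^1\) convergence of \(V_k u_k\): integrating \eqref{eq:dirichle_proposition_pointwise} and using \(\partial u_k/\partial n \ge 0\) yields \(\int_\Omega V_k u_k \d{x} \le \int_\Omega f \d{x}\), hence \(Vu \in L^1(\Omega)\) by Fatou and \(V_k u \to Vu\) in \(L^1\) by monotone convergence. Crucially, the nonnegative \(v_k \defeq u_k - u\) solves \(-\Delta v_k + V_k v_k = (V - V_k) u\) with \(v_k = 0\) on \(\partial\Omega\), and integrating this over \(\Omega\) gives \(\int_\Omega V_k v_k \d{x} \le \int_\Omega (V - V_k) u \d{x} \to 0\) by dominated convergence (dominated by \(Vu\)). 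Hence \(V_k u_k = V_k u + V_k v_k \to Vu\) in \(L^1(\Omega)\), and in particular \(\Delta u_k \to \Delta u\) in \(L^1(\Omega)\).

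For~(iii), the pointwise decay \(u_k \ge u_{k+1} \ge 0\) with \(u_k = u_{k+1} = 0\) on \(\partial\Omega\) yields \(\partial u_k(a)/\partial n \ge \partial u_{k+1}(a)/\partial n \ge 0\) at each \(a \in \partial\Omega\), so the sequence converges pointwise to some nonnegative \(g : \partial\Omega \to \R\) with \(\norm{g}_{L^\infty(\partial\Omega)} \le C_p \norm{f}_{L^p(\Omega)}\) inherited from~(ii); the general sign of \(f\) is handled by linearity. To identify \(g\) with \(\partial u/\partial n\) almost everywhere, I invoke the linear estimate \(\norm{\partial u_k/\partial n - \partial u/\partial n}_{L^1(\partial\Omega)} \le \abs{\Delta(u_k - u)}(\Omega)\) from \cite{BrezisPonce:2008}: by~(i), \(\Delta u_k \to \Delta u\) in \(L^1(\Omega)\), so \(\partial u_k/\partial n \to \partial u/\partial n\) in \(L^1(\partial\Omega)\), and together with the pointwise convergence to \(g\) this forces \(g = \partial u/\partial n\) almost everywhere on \(\partial\Omega\). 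The essential difficulty lies in the \(L^1\) convergence of \(V_k u_k\) in~(i): the decomposition \(V_k u_k = V_k u + V_k(u_k - u)\) bypasses the absence of a dominating function by separating a monotone piece treatable by MCT from a residual controlled by integrating the equation for \(v_k\), and this step relies in an essential way on the a.e.\ identification \(u^{\ast} = u\) obtained beforehand.
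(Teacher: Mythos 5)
Your proof is correct and reaches the same conclusions, but it differs from the paper's argument in two of the three main steps. For the \(L^1\) convergence \(u_k \to u\), the paper writes the equation satisfied by \(u_k - u\) and applies the \(W^{1,p}\) estimate \eqref{eq:W1p_estimate} (of Littman--Stampacchia--Weinberger type), whereas you first extract a pointwise limit \(u^{\ast}\) from the monotone sequence, then identify \(u^{\ast} = u\) through an energy bound, weak \(W_0^{1,2}\) compactness, and uniqueness of solutions. For the convergence \(V_k u_k \to V u\), the paper invokes the absorption estimate \eqref{eq:absorption_estimate} applied to \(u_k - u\); you obtain essentially the same bound by integrating the equation for the nonnegative difference \(v_k = u_k - u\) over \(\Omega\) and using \(\partial v_k/\partial n \ge 0\) -- a mild shortcut made available by the sign of \(v_k\), which bypasses the sign-function test-function argument behind \eqref{eq:absorption_estimate}. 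For the identification \(g = \partial u/\partial n\) a.e., your route is genuinely simpler: you invoke the \(L^1\) continuity \(\norm{\partial(u_k - u)/\partial n}_{L^1(\partial\Omega)} \le \norm{\Delta(u_k - u)}_{L^1(\Omega)}\) from \cite{BrezisPonce:2008} (already cited in the paper) together with \(\Delta u_k \to \Delta u\) in \(L^1(\Omega)\), whereas the paper passes to the limit in the weak formulation \eqref{eq:uk_eq_boundary}, which requires the interpolation estimate \(\norm{\nabla u_k}_{L^2}^2 \le \norm{u_k}_{L^\infty}\norm{\Delta u_k}_{L^1}\) and weak \(L^2\) convergence of \((\nabla u_k)\). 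Your version avoids that machinery entirely. The only cosmetic issues: when you integrate the equation for \(v_k\) over \(\Omega\), you implicitly use that \(v_k \in W_0^{1,1}(\Omega)\) with \(\Delta v_k \in L^1(\Omega)\) (which holds because \(V u \in L^1(\Omega)\) and \(V_k v_k \in L^1(\Omega)\), established just before), so that the normal-derivative identity \eqref{eq:distributional_normal_derivative_identity} with \(\psi \equiv 1\) and the nonnegativity \eqref{eq:distributional_normal_derivative_nonnegativity} apply; and you should say a word that \(u_k \to u\) in \(L^1(\Omega)\) follows from the pointwise monotone convergence plus the dominant \(u_1 \in L^1(\Omega)\). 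Neither of these is a gap, just a missing sentence of justification.
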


Since \(V_k\) is bounded, we have \(u_k \in C^1(\cl\Omega)\) and in particular the classical normal derivative \(\partial u_k / \partial n\) is well-defined on \(\partial\Omega\). 
To see why this is true, let \(w \in C^\infty(\cl\Omega)\) be the solution of
\[
\begin{dcases}
\begin{aligned}
- \Delta w &= \abs{f} && \text{in \(\Omega\),} \\
w &= 0 && \text{on \(\partial\Omega\).}
\end{aligned}
\end{dcases}
\]
The weak maximum principle implies that \(\abs{u_k} \le w\) almost everywhere on \(\Omega\)\,; thus \(u_k \in L^\infty(\Omega)\). 
Since \(V_k\) and \(f\) are bounded, we have \(\Delta u_k \in L^\infty(\Omega)\), hence \(u_k \in W^{2, p}(\Omega)\) for every \(1 < p < \infty\)\,; see \cite{GilbargTrudinger:2001}*{Theorem~9.15 and Lemma~9.17}. 
Taking any \(p > N\), it follows from the Morrey--Sobolev embedding theorem that \(u_k \in C^1(\cl\Omega)\)\,; see \cite{Willem:2013}*{Theorem~6.4.4}. 
In addition, one has the estimate
\begin{equation}
\label{eq:C1_estimate}
\norm{w}_{C^1(\cl\Omega)} 
\le C \norm{\Delta w}_{L^p(\Omega)}
= C \norm{f}_{L^p(\Omega)}
\end{equation}
for some constant \(C > 0\) depending on \(p\) and \(\Omega\). Since
\[
\abs[\bigg]{\frac{\partial u_k}{\partial n}} \le \frac{\partial w}{\partial n} \quad \text{on \(\partial\Omega\),}
\]
one deduces from \eqref{eq:C1_estimate} that
\begin{equation}
\label{eq:Linf_boundary_estimate}
\norm[\bigg]{\frac{\partial u_k}{\partial n}}_{L^\infty(\Omega)} \le \norm{w}_{C^1(\cl\Omega)} \le C \norm{f}_{L^p(\Omega)}.
\end{equation}

Using \Cref{prop:pointwise_normal_derivative}, we then define the pointwise normal derivative of \(u\) with respect to \(- \Delta + V\) as
\[
\boxed{
\frac{\widehat{\partial u}}{\partial n}(a) \defeq g(a) \quad \text{for every \(a \in \partial\Omega\).}
}
\]
At first sight, this definition could depend on the choice of approximation \((V_k)\) like
\[
V_k = \min \set{V, k},
\]
but as we shall see later on it does not; see \Cref{rem:independence_from_Vk}. As a consequence of assertion \emph{(iii)} in \Cref{prop:pointwise_normal_derivative}, \(\widehat{\partial u} / \partial n\) is a distributional normal derivative of \(u\).

Before proceeding with the proof of \Cref{prop:pointwise_normal_derivative}, we recall standard estimates for solutions of the Dirichlet problem
\begin{equation}
\label{eq:dirichlet_problem2}
\begin{dcases}
\begin{aligned}
- \Delta u + V u &= \mu && \text{in \(\Omega\),} \\
u &= 0 && \text{on \(\partial\Omega\),}
\end{aligned}
\end{dcases}
\end{equation}
where \(\mu \in L^1(\Omega)\). By a solution of \eqref{eq:dirichlet_problem2}, we mean a function \(u \in W_0^{1, 1}(\Omega) \cap L^1(\Omega; V\d{x})\) that satisfies the equation in the sense of distributions in \(\Omega\). 
For all \(1 \le p < \frac{N}{N - 1}\), the solution exists, is unique and belongs to \(W_0^{1, p}(\Omega)\) with
\begin{equation}
\label{eq:W1p_estimate}
\norm{u}_{W^{1, p}(\Omega)} \le C \norm{\mu}_{L^1(\Omega)}
\end{equation}
for some constant \(C > 0\) depending on \(p\) and \(\Omega\).{}
This can be deduced from elliptic estimates due to Littman, Stampacchia and Weinberger \cite{LittmanStampacchiaWeinberger:1963}*{Theorem~5.1} and from the absorption estimate
\begin{equation}
\label{eq:absorption_estimate}
\norm{V u}_{L^1(\Omega)} \le \norm{\mu}_{L^1(\Omega)}.
\end{equation}
The latter inequality can be obtained using as test function a suitable approximation of the sign function \(\sgn u\)\,; see \cite{BrezisMarcusPonce:2007}*{Proposition~4.B.3} or \cite{Ponce:2016}*{Proposition~21.5}.

The weak maximum principle for \eqref{eq:dirichlet_problem2} that is mentioned in the introduction is justified by the following

\begin{lemma}
\label{lem:weak_maximum_principle}
Let \(u\) be the solution of \eqref{eq:dirichlet_problem2} involving \(\mu \in L^1(\Omega)\). If \(\mu \ge 0\) almost everywhere on \(\Omega\), then \(u \ge 0\) almost everywhere on \(\Omega\).
\end{lemma}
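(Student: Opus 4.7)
The aim is to prove that $u_- \defeq \max\{-u, 0\} = 0$ almost everywhere on $\Omega$. My plan is to establish that $u_-$ is a nonnegative distributional subharmonic function with vanishing boundary trace, so that a weak maximum principle forces $u_- = 0$. A direct test of the equation against $u_-$ is not available because $u$ belongs only to $W_0^{1, p}(\Omega)$ for $p < N/(N - 1) < 2$, which is insufficient energy regularity; I proceed instead via Kato's inequality.

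Since $u \in W_0^{1, 1}(\Omega)$ and $-\Delta u = \mu - Vu$ belongs to $L^1(\Omega)$ (the summability $V u \in L^1(\Omega)$ being supplied by the absorption estimate \eqref{eq:absorption_estimate}), Kato's inequality applied to $-u$ yields
\[
\Delta u_- \ge - \chi_{\{u < 0\}} \Delta u = \chi_{\{u < 0\}} (\mu - V u) \quad \text{in \(\cD'(\Omega)\).}
\]
On the set $\{u < 0\}$, one has $\mu \ge 0$ and $-Vu \ge 0$ (since $V \ge 0$), so the right-hand side is nonnegative. Therefore $-\Delta u_- \le 0$ in $\cD'(\Omega)$, while $u_- \in W_0^{1, 1}(\Omega)$ and $u_- \ge 0$. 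The weak maximum principle for distributional subharmonic functions with vanishing boundary trace then yields $u_- \le 0$, whence $u_- = 0$ and $u \ge 0$ almost everywhere on $\Omega$.

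The principal technical obstacle is to justify this last weak maximum principle in $W_0^{1, 1}(\Omega)$, since one cannot directly test the distributional inequality $-\Delta u_- \le 0$ against $u_-$ itself. A standard remedy is to truncate at level $k > 0$ and use $T_k(u_-) \defeq \min\{u_-, k\} \in W_0^{1, 2}(\Omega) \cap L^\infty(\Omega)$ as admissible test function, then pass to the limit $k \to \infty$; see \cite{Ponce:2016}.

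An alternative route that bypasses Kato's inequality is to argue by approximation. Set $V_k \defeq \min\{V, k\} \in L^\infty(\Omega)$ and let $u_k$ be the solution of \eqref{eq:dirichlet_problem2} with potential $V_k$; each $u_k$ is nonnegative almost everywhere by the classical weak maximum principle for bounded potentials. The difference satisfies
\[
- \Delta (u - u_k) + V_k (u - u_k) = -(V - V_k) u \quad \text{in \(\Omega\),}
\]
whose datum lies in $L^1(\Omega)$, is dominated by $V \abs{u} \in L^1(\Omega)$, and vanishes almost everywhere as $k \to \infty$. Dominated convergence combined with the estimate \eqref{eq:W1p_estimate} gives $u_k \to u$ in $W_0^{1, p}(\Omega)$, hence in $L^1(\Omega)$, so the nonnegativity of the $u_k$ passes to $u$.
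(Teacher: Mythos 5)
Your first route mirrors the paper's in spirit---both apply Kato's inequality to \(w = -u\)---but with a crucial difference: the paper uses the \emph{boundary} version of Kato's inequality due to Marcus and V\'eron, namely \eqref{eq:kato_inequality}, where test functions range over \(C_0^\infty(\cl\Omega)\) rather than \(C_c^\infty(\Omega)\) and a boundary integral keeps track of the trace. Taking \(w = -u\), \(h = Vu - \mu\), \(\nu = 0\) in \eqref{eq:kato_inequality} gives \(-\int_\Omega (-u)^+ \Delta\zeta \d{x} \le 0\) for every nonnegative \(\zeta \in C_0^\infty(\cl\Omega)\), and choosing \(\zeta = \theta\), the torsion function solving \(-\Delta\theta = 1\), \(\theta = 0\) on \(\partial\Omega\), immediately yields \(\int_\Omega (-u)^+ \d{x} \le 0\). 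You instead use the interior Kato's inequality, which leaves a boundary step that is not as routine as you suggest: \(\Delta u_-\) is a nonnegative Radon measure on \(\Omega\) that is not a priori \emph{finite}, and the membership \(T_k(u_-) \in W_0^{1,2}(\Omega)\) does not follow from \(u\) being a distributional \(W_0^{1,1}\) solution alone---it requires a Boccardo--Gallou\"et type bound \(\norm{\nabla T_k(u_-)}_{L^2(\Omega)}^2 \le k\norm{\mu}_{L^1(\Omega)}\), which must itself be established, typically via the very same approximation \(V_k \uparrow V\). So the truncation remedy you sketch has a genuine gap, and closing it amounts to reproving what the boundary Kato inequality already packages.

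Your second, approximation-based route is a real alternative that sidesteps Kato's inequality entirely and is essentially sound. One small point deserves care: the assertion that \(u_k \ge 0\) ``by the classical weak maximum principle for bounded potentials'' still needs a short argument at the level of \(L^1\) data \(\mu\) (for instance by also truncating \(\mu\), or by duality against nonnegative \(\varphi \in L^\infty(\Omega)\) and the max principle for the adjoint problem with \(L^\infty\) data), since the truly classical statement presupposes enough regularity for pointwise evaluation.
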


The proof of \Cref{lem:weak_maximum_principle} relies on a variant of Kato's inequality: if \(w \in L^1(\Omega)\), \(h \in L^1(\Omega, d_{\partial\Omega} \d{x})\) and \(\nu \in \cM(\partial\Omega)\) satisfy
\begin{equation}
\label{eq:identity_boundary}
- \int_\Omega w \Delta \zeta \d{x} = \int_\Omega h \zeta \d{x} + \int_{\partial\Omega} \frac{\partial \zeta}{\partial n} \d\nu \quad \text{for every \(\zeta \in C_0^\infty(\cl\Omega)\),}
\end{equation}
then
\begin{equation}
\label{eq:kato_inequality}
- \int_\Omega {} w^+ \Delta \zeta \d{x} \le \int_{\set{w \ge 0}} h \zeta \d{x} + \int_{\partial\Omega} \frac{\partial \zeta}{\partial n} \d\nu^+ \quad \text{for every \(\zeta \in C_0^\infty(\cl\Omega)\), \(\zeta \ge 0\) on \(\cl\Omega\)\,;}
\end{equation}
see \cite{MarcusVeron:1998}*{Lemma~1.5} or \cite{MarcusVeron:2014}*{Proposition~1.5.9}. Here \(\cM(\partial\Omega)\) denotes the vector space of finite Borel measures on \(\partial\Omega\) and
\[
C_0^\infty(\cl\Omega) = \set{\zeta \in C^\infty(\cl\Omega) : \text{\(\zeta = 0\) on \(\partial\Omega\)}}.
\]
When \(\nu = 0\), the integral identity \eqref{eq:identity_boundary} implicitly encodes the fact that \(w = 0\) on \(\partial\Omega\) in an average sense as test functions need not have compact support in \(\Omega\)\,; see \cite{Ponce:2016}*{Proposition~20.2} and also \cite{DiazRakotoson:2009} for related questions. To deduce the weak maximum principle it now suffices to take \(w = - u\), \(h = V u - \mu\) and \(\nu = 0\), and then \eqref{eq:kato_inequality} becomes
\[
- \int_\Omega {} (-u)^+ \Delta \zeta \d{x} \le 0 \quad \text{for every \(\zeta \in C_0^\infty(\cl\Omega)\), \(\zeta \ge 0\) on \(\cl\Omega\).}
\]

One last ingredient involved in the proof of \Cref{prop:pointwise_normal_derivative} is the following comparison principle:

\begin{lemma}
\label{lem:comparison_principle}
Let \(V_1, V_2 \in L_\loc^1(\Omega)\) be two nonnegative functions such that \(V_1 \le V_2\) almost everywhere on \(\Omega\), and let \(u_i \in L^1(\Omega) \cap L^1(\Omega; V_i d_{\partial\Omega} \d{x})\), with \(i \in \set{1, 2}\), be two nonnegative functions such that
\[
- \int_\Omega {} (u_2 - u_1) \Delta \zeta \d{x} + \int_\Omega {} (V_2 u_2 - V_1 u_1) \zeta \d{x} = 0 \quad \text{for every \(\zeta \in C_0^\infty(\cl\Omega)\).}
\]
Then \(u_2 \le u_1\) almost everywhere on \(\Omega\).
\end{lemma}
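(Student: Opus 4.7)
The plan is to apply the Kato-type inequality \eqref{eq:kato_inequality} to the difference \(w \defeq u_{2} - u_{1}\), showing that \(w^{+} = 0\) almost everywhere on \(\Omega\). Setting \(h \defeq V_{1} u_{1} - V_{2} u_{2}\), the integrability assumption \(u_{i} \in L^{1}(\Omega; V_{i} d_{\partial\Omega} \d{x})\) guarantees that \(h \in L^{1}(\Omega, d_{\partial\Omega} \d{x})\), and the hypothesis of the lemma reads, after rearrangement,
\[
- \int_{\Omega} w \Delta \zeta \d{x} = \int_{\Omega} h \zeta \d{x} \quad \text{for every \(\zeta \in C_{0}^{\infty}(\cl\Omega)\),}
\]
which is exactly the form \eqref{eq:identity_boundary} with boundary measure \(\nu = 0\).

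Kato's inequality \eqref{eq:kato_inequality} then yields, for every nonnegative \(\zeta \in C_{0}^{\infty}(\cl\Omega)\),
\[
- \int_{\Omega} w^{+} \Delta \zeta \d{x} \le \int_{\set{w \ge 0}} h \zeta \d{x} = \int_{\set{u_{2} \ge u_{1}}} (V_{1} u_{1} - V_{2} u_{2}) \zeta \d{x}.
\]
The key pointwise observation is that on the set \(\set{u_{2} \ge u_{1}}\), the nonnegativity of \(u_{1}\) together with \(V_{1} \le V_{2}\) gives \(V_{1} u_{1} \le V_{2} u_{1} \le V_{2} u_{2}\), so the integrand on the right-hand side is nonpositive. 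Consequently,
\[
- \int_{\Omega} w^{+} \Delta \zeta \d{x} \le 0 \quad \text{for every \(\zeta \in C_{0}^{\infty}(\cl\Omega)\) with \(\zeta \ge 0\).}
\]

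To conclude, I would test the previous inequality with a strictly superharmonic function vanishing on \(\partial\Omega\). The natural choice is the torsion function \(\zeta_{1} \in C^{\infty}(\cl\Omega)\) solving \(- \Delta \zeta_{1} = 1\) in \(\Omega\) with \(\zeta_{1} = 0\) on \(\partial\Omega\), whose smoothness up to the boundary follows from elliptic regularity on the smooth domain \(\Omega\). This yields \(\int_{\Omega} w^{+} \d{x} \le 0\), hence \(w^{+} = 0\) almost everywhere on \(\Omega\), that is, \(u_{2} \le u_{1}\) almost everywhere. The only subtle point in the whole argument is the pointwise estimate on \(\set{u_{2} \ge u_{1}}\); here the monotonicity \(V_{1} \le V_{2}\) combined with \(u_{1} \ge 0\) is precisely what is needed, and it would fail if either \(u_{1}\) were allowed to change sign or the ordering of the potentials were reversed.
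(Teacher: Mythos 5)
Your proof is correct and matches the paper's intended argument exactly: the paper gives only a one-line sketch (apply Kato's inequality \eqref{eq:kato_inequality} with \(w = u_2 - u_1\), \(\nu = 0\), ``as above''), and you supply precisely the missing details, including the key pointwise estimate \(V_1 u_1 \le V_2 u_1 \le V_2 u_2\) on \(\{u_2 \ge u_1\}\) and the final test with a torsion function. A small notational caveat: the function you call \(\zeta_1\) is denoted \(\theta\) in the paper (see \eqref{eq:test_function_Dirichlet}); the symbol \(\zeta_1\) is already reserved in the paper for the solution of the Schr\"odinger Dirichlet problem \eqref{eq:dirichlet_problem} with \(f \equiv 1\), which need not be smooth.
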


\Cref{lem:comparison_principle} can be deduced using Kato's inequality as above by taking \(w = u_2 - u_1\).

\begin{proof}[Proof of \Cref{prop:pointwise_normal_derivative}]
We assume that \(f\) is nonnegative; the general case follows by solving the Dirichlet problem with the positive and negative parts of \(f\), and then conclude using the linearity of the equation in \eqref{eq:dirichlet_problem} and uniqueness of solutions. Hence, by the weak maximum principle, \(u\) and \(u_k\) are nonnegative. Since \(u\) satisfies
\[
- \Delta u + V_k u = f - (V - V_k) u \quad \text{in the sense of distributions in \(\Omega\),}
\]
we have
\[
- \Delta (u_k - u) + V_k (u_k - u) = (V - V_k) u \quad \text{in the sense of distributions in \(\Omega\).}
\]
One deduces from \eqref{eq:W1p_estimate} applied to \(u_k - u\) that
\[
\norm{u_k - u}_{L^1(\Omega)} \le C \norm{(V - V_k) u}_{L^1(\Omega)}.
\]
By Lebesgue's dominated convergence theorem, the right-hand side of this inequality tends to \(0\) as \(k \to \infty\). 
Hence \(u_k \to u\) in \(L^1(\Omega)\). Since \((u_k)\) is non-increasing as a consequence of \Cref{lem:comparison_principle}, the convergence also holds everywhere on \(\Omega\). 
The triangle inequality and the absorption estimate \eqref{eq:absorption_estimate} applied to \(u_k - u\) imply that
\[
\norm{V_k u_k - V u}_{L^1(\Omega)} \le \norm{V_k (u_k - u)}_{L^1(\Omega)} + \norm{(V_k - V) u}_{L^1(\Omega)} \le 2 \norm{(V - V_k) u}_{L^1(\Omega)},
\]
and then \(V_k u_k \to V u\) in \(L^1(\Omega)\).

By comparison of normal derivatives, the sequence \((\partial u_k / \partial n)\) is non-increasing and nonnegative. In particular, it is uniformly bounded on \(\partial\Omega\) and converges in \(L^1(\partial\Omega)\) and everywhere on \(\partial\Omega\) to some nonnegative bounded measurable function \(g : \partial\Omega \to \R\). Let us show that
\[
g = \frac{\partial u}{\partial n} \quad \text{almost everywhere on \(\partial\Omega\),}
\]
where \(\partial u / \partial n\) is the distributional normal derivative of \(u\).
For this purpose, we recall that each \(u_k\) satisfies
\begin{equation}
\label{eq:uk_eq_boundary}
\int_\Omega \nabla u_k \cdot \nabla \psi \d{x} = \int_\Omega f \psi \d{x} - \int_\Omega V_k u_k \psi \d{x} - \int_{\partial\Omega} \frac{\partial u_k}{\partial n} \psi \d\sigma
\end{equation}
for every \(\psi \in C^\infty(\cl\Omega)\). By standard interpolation, which in this case follows from an integration by parts, one also has the estimate
\[
\norm{\nabla u_k}_{L^2(\Omega)}^2 \le \norm{u_k}_{L^\infty(\Omega)} \norm{\Delta u_k}_{L^1(\Omega)}\,;
\]
see \cite{Ponce:2016}*{Lemma~5.8}. We claim that the right-hand side of this inequality is bounded. Indeed, as \((u_k)\) is non-increasing, it is bounded from above by \(u_0\). On the other hand, we deduce from the triangle inequality and the absorption estimate \eqref{eq:absorption_estimate} that
\[
\norm{\Delta u_k}_{L^1(\Omega)} \le \norm{f}_{L^1(\Omega)} + \norm{V_k u_k}_{L^1(\Omega)} \le 2 \norm{f}_{L^1(\Omega)},
\]
which validates our claim.

Since \((\nabla u_k)\) is bounded in \(L^2(\Omega; \R^N)\) and \(u_{k} \to u\) in \(L^{1}(\Omega)\), we have
\[
\nabla u_k \weakto \nabla u \quad \text{weakly in \(L^{2}(\Omega; \R^N)\).}
\]
Taking the limit as \(k \to \infty\) in \eqref{eq:uk_eq_boundary}, we obtain
\[
\int_{\partial\Omega} \frac{\partial u}{\partial n} \psi \d\sigma = \int_{\partial\Omega} g \psi \d\sigma \quad \text{for every \(\psi \in C^\infty(\cl\Omega)\).}
\]
Hence \(\partial u / \partial n = g\) almost everywhere on \(\partial\Omega\). 
The estimate
\[
\norm{g}_{L^\infty(\partial\Omega)} \le C \norm{f}_{L^p(\Omega)}
\]
follows from \eqref{eq:Linf_boundary_estimate} since \(0 \le g \le \partial u_k / \partial n\) on \(\partial\Omega\).
\end{proof}

\section{Duality solution with measure data on the boundary}
\label{sec:boundary_value_problem}
\label{sec:L1_weak_derivative}

We investigate the boundary value problem \eqref{eq:boundary_value_problem} involving a finite Borel measure \(\nu\) on \(\partial\Omega\) by comparing two notions of solution based on different choices of test functions.

\begin{definition}
A function \(v \in L^1(\Omega)\) is a \emph{distributional} solution of \eqref{eq:boundary_value_problem} with datum \(\nu \in \cM(\partial\Omega)\) whenever \(V v \in L^1(\Omega; d_{\partial\Omega} \d{x})\) and
\[
\int_\Omega v \paren{- \Delta \zeta + V \zeta} \d{x} = \int_{\partial\Omega} \frac{\partial \zeta}{\partial n} \d\nu{}
\quad \text{for every \(\zeta \in C_0^\infty(\cl\Omega)\).}
\]
\end{definition}

The boundary value problem for this type of solutions has been studied by V\'eron and Yarur~\cite{VeronYarur:2012} with nonnegative potentials \(V \in L_\loc^\infty(\Omega)\). In particular, the authors prove that nonnegative measures for which \eqref{eq:boundary_value_problem} has a solution cannot charge \(\Sigma\)\,; see \cite{VeronYarur:2012}*{Theorem~4.4}. 
Their approach is based on the careful study of some capacity associated to the Poisson kernel of \(- \Delta\) on \(\Omega\). In our case, we rely instead on the concept of duality solution in the spirit of the work of Malusa and Orsina~\cite{MalusaOrsina:1996} that has its roots in the seminal paper of Littman, Stampacchia and Weinberger~\cite{LittmanStampacchiaWeinberger:1963}.

\begin{definition}
A function \(v \in L^1(\Omega)\) is a \emph{duality} solution of \eqref{eq:boundary_value_problem} with datum \(\nu \in \cM(\partial\Omega)\) whenever
\[
\int_\Omega v f \d{x} = \int_{\partial\Omega} \frac{\widehat{\partial \zeta_f}}{\partial n} \d\nu \quad \text{for every \(f \in L^\infty(\Omega)\),}
\]
where \(\zeta_f\) is the solution of \eqref{eq:dirichlet_problem} with datum \(f\).
\end{definition}

Existence of duality solutions is a straightforward consequence of the Riesz representation theorem:

\begin{proposition}
The boundary value problem \eqref{eq:boundary_value_problem} has a unique duality solution for every datum \(\nu \in \cM(\partial\Omega)\).
\end{proposition}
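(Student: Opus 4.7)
The plan is to verify that this is indeed a direct application of the Riesz representation theorem, using the boundary estimate established in \Cref{prop:pointwise_normal_derivative}(iii) as the key tool.

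First, I would define the linear functional \(T \colon L^\infty(\Omega) \to \R\) by
\[
T(f) \defeq \int_{\partial\Omega} \frac{\widehat{\partial \zeta_f}}{\partial n} \d\nu,
\]
where \(\zeta_f\) is the solution of \eqref{eq:dirichlet_problem} with datum \(f\). Linearity of \(T\) follows from the linearity of the map \(f \mapsto \zeta_f\) (which itself is a consequence of linearity of \(- \Delta + V\) together with uniqueness of solutions of \eqref{eq:dirichlet_problem2} in the class \(W_{0}^{1,2}(\Omega) \cap L^{2}(\Omega; V \d{x})\)), and then from the linearity of the pointwise normal derivative \(\widehat{\partial (\cdot)}/\partial n\), which is inherited from its definition via the approximating sequence \((V_{k})\).

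Next, I would fix some \(p > N\) and invoke the quantitative bound
\[
\norm[\bigg]{\frac{\widehat{\partial \zeta_f}}{\partial n}}_{L^\infty(\partial\Omega)} \le C \norm{f}_{L^p(\Omega)}
\]
supplied by \Cref{prop:pointwise_normal_derivative}(iii), from which one immediately gets
\[
\abs{T(f)} \le C \abs{\nu}(\partial\Omega) \norm{f}_{L^p(\Omega)}.
\]
Since \(\Omega\) is bounded, \(L^\infty(\Omega)\) is dense in \(L^p(\Omega)\), so \(T\) extends uniquely to a continuous linear functional on \(L^p(\Omega)\). The Riesz representation theorem then furnishes \(v \in L^{p'}(\Omega)\), with conjugate exponent \(p' = p/(p-1) < N/(N-1)\), such that \(T(f) = \int_\Omega v f \d{x}\) for every \(f \in L^p(\Omega)\). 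In particular, this identity holds for every \(f \in L^\infty(\Omega)\); and since \(\Omega\) is bounded, \(v \in L^{p'}(\Omega) \subset L^1(\Omega)\), which gives the existence of a duality solution.

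Uniqueness follows immediately from the definition: if \(v_1, v_2 \in L^1(\Omega)\) are two duality solutions for the same datum \(\nu\), then
\[
\int_\Omega (v_1 - v_2) f \d{x} = 0 \quad \text{for every \(f \in L^\infty(\Omega)\),}
\]
so \(v_1 = v_2\) almost everywhere on \(\Omega\) (take e.g.\ \(f = \sgn(v_1 - v_2)\)). There is no genuine obstacle here; the only point worth emphasizing is that the \(L^p\)-continuity for some \(p > N\), rather than mere \(L^\infty\)-continuity, is exactly what allows one to represent \(T\) by integration against an honest \(L^1\) function rather than an element of \((L^\infty)^*\).
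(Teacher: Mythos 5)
Your proof is correct and follows essentially the same route as the paper: both invoke the $L^\infty$ bound on the pointwise normal derivative from Proposition~\ref{prop:pointwise_normal_derivative}(iii) to show continuity of the duality pairing with respect to the $L^p$ norm for $p>N$, and then apply the Riesz representation theorem to produce $v\in L^{p'}(\Omega)\subset L^1(\Omega)$. Your version merely spells out a few details the paper leaves implicit (linearity of $f\mapsto\zeta_f$ and of $\widehat{\partial(\cdot)}/\partial n$, the density of $L^\infty$ in $L^p$, and the direct $L^1$-uniqueness argument rather than relying only on Riesz uniqueness within $L^{p'}$), but there is no substantive difference in method.
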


\begin{proof}
Let \(N < p < \infty\). It follows from \Cref{prop:pointwise_normal_derivative} that, for every \(f \in L^\infty(\Omega)\),
\[
\abs[\bigg]{\int_{\partial\Omega} \frac{\widehat{\partial \zeta_f}}{\partial n} \d\nu} \le \norm{\nu}_{\cM(\partial\Omega)} \norm[\bigg]{\frac{\widehat{\partial \zeta_f}}{\partial n}}_{L^\infty(\partial\Omega)} \le C \norm{\nu}_{\cM(\partial\Omega)} \norm{f}_{L^p(\Omega)},
\]
where
\[
\norm{\nu}_{\cM(\partial\Omega)} \defeq \abs{\nu}(\partial\Omega).
\]
Hence, the linear functional
\[
L^\infty(\Omega) \to \R : f \mapsto \int_{\partial\Omega} \frac{\widehat{\partial \zeta_f}}{\partial n} \d\nu
\]
is continuous on \(L^\infty(\Omega)\), endowed with the \(L^p\) norm. The Riesz representation theorem implies the existence of a unique \(v \in L^{p'}(\Omega)\) such that
\[
\int_\Omega v f \d{x} = \int_{\partial\Omega} \frac{\widehat{\partial \zeta_f}}{\partial n} \d\nu \quad \text{for every \(f \in L^\infty(\Omega)\),}
\]
where \(p' = \frac{p}{p - 1}\) is the conjugate exponent with respect to \(p\). Hence \(v\) is the unique duality solution of \eqref{eq:boundary_value_problem} involving \(\nu\).
\end{proof}

We now prove that distributional solutions are duality solutions:

\begin{proposition}
\label{prop:distributional_solutions_are_duality_solutions}
If \(v\) is a distributional solution of \eqref{eq:boundary_value_problem} with datum \(\nu \in \cM(\partial\Omega)\), then \(v\) is also a duality solution of \eqref{eq:boundary_value_problem} with datum \(\nu\).
\end{proposition}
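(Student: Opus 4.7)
The plan is to test the distributional identity for \(v\) against \(\zeta_{f,k}\), the solution of \eqref{eq:dirichle_proposition_pointwise} with the bounded potential \(V_k\), and then pass to the limit \(k \to \infty\) using \Cref{prop:pointwise_normal_derivative}. Since \(V_k \in L^\infty(\Omega)\), the function \(\zeta_{f,k}\) lies in \(C^1(\cl\Omega) \cap W^{2,p}(\Omega)\) for every \(p < \infty\), vanishes on \(\partial\Omega\), and satisfies the uniform bound \(\abs{\zeta_{f,k}} \le C\, d_{\partial\Omega}\) on \(\Omega\) thanks to \eqref{eq:C1_estimate}. Rewriting its equation with respect to the original potential yields
\[
- \Delta \zeta_{f,k} + V \zeta_{f,k} = f + (V - V_k)\zeta_{f,k}.
\]

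The first step is to extend the distributional identity for \(v\) from test functions in \(C_0^\infty(\cl\Omega)\) to the larger class of \(\zeta \in W^{2,p}(\Omega) \cap W_0^{1,p}(\Omega)\), for some fixed \(p > N\), having \(\Delta \zeta \in L^\infty(\Omega)\) — a class that contains every \(\zeta_{f,k}\). Given such a \(\zeta\), I would approximate \(-\Delta \zeta\) in \(L^p(\Omega)\) by functions \(h_n \in C^\infty(\cl\Omega)\) with \(\norm{h_n}_{L^\infty(\Omega)} \le \norm{\Delta \zeta}_{L^\infty(\Omega)}\) (via truncation and mollification), and then let \(\phi_n \in C_0^\infty(\cl\Omega)\) solve \(-\Delta \phi_n = h_n\) with zero boundary data, using elliptic regularity. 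Then \(\phi_n \to \zeta\) in \(W^{2,p}(\Omega)\) and hence in \(C^1(\cl\Omega)\) by Morrey--Sobolev, which delivers \(\abs{\phi_n} \le C\, d_{\partial\Omega}\) uniformly in \(n\). Passing to the limit in the identity for \(\phi_n\), the Laplacian term converges by dominated convergence using \(v \in L^1(\Omega)\) and the \(L^\infty\) bound on \(\Delta \phi_n\); the potential term converges by dominated convergence using \(Vv \in L^1(\Omega; d_{\partial\Omega}\, \d{x})\) together with \(\abs{\phi_n} \le C\, d_{\partial\Omega}\); and the boundary term converges by uniform convergence of \(\partial \phi_n/\partial n\) integrated against \(\abs{\nu}\).

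Applying the extended identity with \(\zeta = \zeta_{f,k}\) then yields
\[
\int_\Omega v f \d{x} + \int_\Omega v (V - V_k) \zeta_{f,k} \d{x} = \int_{\partial\Omega} \frac{\partial \zeta_{f,k}}{\partial n} \d\nu.
\]
By \Cref{prop:pointwise_normal_derivative}, \(\partial \zeta_{f,k}/\partial n\) is uniformly bounded on \(\partial\Omega\) and converges pointwise to \(\widehat{\partial \zeta_f}/\partial n\), so the right-hand side converges to \(\int_{\partial\Omega} \widehat{\partial \zeta_f}/\partial n \d\nu\) by dominated convergence against \(\abs{\nu}\). For the extra term on the left, the uniform bound \(\abs{\zeta_{f,k}} \le C\, d_{\partial\Omega}\), combined with \(V_k \to V\) almost everywhere on \(\Omega\) and \(Vv \in L^1(\Omega; d_{\partial\Omega}\, \d{x})\), allows another application of dominated convergence to send it to zero. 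What remains is precisely the duality identity. I expect the density step of the first paragraph — producing smooth approximants \(\phi_n\) that keep both \(\norm{\Delta \phi_n}_{L^\infty(\Omega)}\) and \(\norm{\phi_n / d_{\partial\Omega}}_{L^\infty(\Omega)}\) uniformly under control — to be the main obstacle; here \(C^1(\cl\Omega)\) convergence of the \(\phi_n\) and the \(L^\infty\)-preserving choice of the \(h_n\) handle the two bounds respectively.
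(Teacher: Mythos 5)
Your proof is correct, but the route is genuinely different from the paper's. The paper proceeds in two steps: first the bounded-\(V\) case, handled by mollifying \(\zeta_f\) in \(C^1(\cl\Omega)\) by smooth approximants \(\zeta_{f_k}\); then the general case, handled by introducing distributional solutions \(v_k\) of the boundary value problem \eqref{eq:dirichlet_problem_approx} with the truncated potentials \(V_k\) (whose existence requires \Cref{prop:existence_bounded_potential}) and passing \(v_k \to v\) in \(L^1(\Omega)\) via \Cref{lem:approximation_distributional_solutions}. You instead keep \(v\) fixed throughout, extend the distributional identity for \(v\) to the class of test functions \(\zeta \in W^{2,p}(\Omega) \cap W^{1,p}_0(\Omega)\) with \(\Delta\zeta \in L^\infty(\Omega)\) by the same kind of mollification-of-the-source density argument, and then plug in \(\zeta_{f,k}\) directly. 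The price is a remainder term \(\int_\Omega v\,(V - V_k)\,\zeta_{f,k}\d{x}\), which you correctly dominate by \(C\,\abs{v}\,V\,d_{\partial\Omega} \in L^1(\Omega)\) (using the uniform bound \(\abs{\zeta_{f,k}} \le C\,d_{\partial\Omega}\) that follows from the comparison \(\abs{\zeta_{f,k}} \le w\) and \eqref{eq:C1_estimate}) and send to zero by dominated convergence. What your approach buys is that it bypasses both \Cref{lem:approximation_distributional_solutions} and \Cref{prop:existence_bounded_potential} entirely for this proposition — it never needs to know that the approximate boundary value problem with datum \(\nu\) has a distributional solution, nor that those solutions converge. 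The paper's route, in exchange, sets up machinery (\Cref{lem:approximation_distributional_solutions,prop:existence_bounded_potential}) that is reused later, e.g.\ in the proof of \Cref{prop:duality_solutions_are_distributional_solutions}.

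One small point on your extension step, more for the write-up than the substance: when you send \(\int_\Omega v\,h_n \d{x} \to \int_\Omega v\,(-\Delta\zeta)\d{x}\) with \(v \in L^1(\Omega)\) and \(h_n\) merely bounded in \(L^\infty\) and convergent in \(L^p\), dominated convergence needs a.e.\ convergence of \(h_n\), which \(L^p\) convergence gives only along a subsequence; either invoke the usual subsequence-of-every-subsequence argument, or simply observe that mollifications of an \(L^\infty\) function converge at every Lebesgue point, hence a.e., so the full sequence works. This is a cosmetic remark, not a gap.
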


For the proof of \Cref{prop:distributional_solutions_are_duality_solutions}, we need a couple of lemmas.
We begin with

\begin{lemma}
\label{lem:approximation_distributional_solutions}
Assume that \eqref{eq:boundary_value_problem} has a distributional solution \(v\) with datum \(\nu \in \cM(\partial\Omega)\) and let \(v_k\) be the distributional solution of
\begin{equation}
\label{eq:dirichlet_problem_approx}
\begin{dcases}
\begin{aligned}
- \Delta v_k + V_k v_k &= 0 && \text{in \(\Omega\),} \\
v_k &= \nu && \text{on \(\partial\Omega\).}
\end{aligned}
\end{dcases}
\end{equation}
Then \(v_k \to v\) in \(L^1(\Omega)\).
\end{lemma}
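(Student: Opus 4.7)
My plan is the following. Subtracting the distributional formulations satisfied by \(v\) and \(v_k\) and reorganizing the potential terms as \(V_k v_k - V v = V_k (v_k - v) + (V_k - V) v\), one verifies that the difference \(w_k \defeq v_k - v\) satisfies
\[
\int_\Omega w_k \paren{- \Delta \zeta + V_k \zeta} \d{x} = \int_\Omega (V - V_k) v \zeta \d{x}
\]
for every \(\zeta \in C_0^\infty(\cl\Omega)\); equivalently, \(w_k\) is a distributional solution of \(- \Delta w + V_k w = (V - V_k) v\) in \(\Omega\) with zero measure boundary datum. Because \(\abs{(V - V_k) v} \le V \abs{v}\) almost everywhere on \(\Omega\) and \(V v \in L^1(\Omega; d_{\partial\Omega} \d{x})\) by assumption, the source term lies in \(L^1(\Omega; d_{\partial\Omega} \d{x})\).

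The heart of the proof is the Malusa--Orsina-type \(L^1\) estimate
\[
\norm{w_k}_{L^1(\Omega)} \le C \norm{(V - V_k) v}_{L^1(\Omega; d_{\partial\Omega} \d{x})}
\]
with constant \(C\) independent of \(k\), which I would establish by duality. For \(\phi \in L^\infty(\Omega)\), let \(\zeta\) be the solution of \(- \Delta \zeta + V_k \zeta = \phi\) with \(\zeta = 0\) on \(\partial\Omega\), which by the argument in the proof of \Cref{prop:pointwise_normal_derivative} belongs to \(C^1(\cl\Omega) \cap W^{2, p}(\Omega)\) for every \(p < \infty\). The weak maximum principle yields \(\abs{\zeta} \le w\), where \(-\Delta w = \abs{\phi}\) and \(w = 0\) on \(\partial\Omega\); since \(w \in C^1(\cl\Omega)\) with \(\norm{w}_{C^1(\cl\Omega)} \le C \norm{\phi}_{L^\infty}\) vanishes on \(\partial\Omega\), one has \(\abs{\zeta(x)} \le C \norm{\phi}_{L^\infty} d_{\partial\Omega}(x)\) uniformly in \(k\). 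Using \(\zeta\) as a test function --- a step legitimate after a density approximation by elements of \(C_0^\infty(\cl\Omega)\), which is available because \(w_k \in W_0^{1, q}(\Omega)\) for every \(q < \frac{N}{N - 1}\) by estimate~\eqref{eq:W1p_estimate} --- one obtains
\[
\abs[\bigg]{\int_\Omega w_k \phi \d{x}} = \abs[\bigg]{\int_\Omega (V - V_k) v \zeta \d{x}} \le C \norm{\phi}_{L^\infty} \norm{(V - V_k) v}_{L^1(\Omega; d_{\partial\Omega} \d{x})},
\]
and the desired \(L^1\)-estimate follows by duality.

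To conclude, I apply dominated convergence: since \((V - V_k) v \to 0\) almost everywhere on \(\Omega\) and is dominated by \(V \abs{v} \in L^1(\Omega; d_{\partial\Omega} \d{x})\), the right-hand side of the estimate tends to zero as \(k \to \infty\). Hence \(v_k \to v\) in \(L^1(\Omega)\). The main obstacle is precisely the density approximation required to justify that \(\zeta\), which is only \(C^1(\cl\Omega)\) and not a priori in \(C_0^\infty(\cl\Omega)\), is admissible as a test function in the distributional formulation satisfied by \(w_k\); this is a standard but technical point, made possible by the \(W_0^{1, q}\)-regularity of \(w_k\) and the boundedness of \(V_k\).
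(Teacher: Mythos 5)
Your proof is essentially correct but takes a genuinely different route from the paper's. You establish the key estimate
\[
\norm{v_k - v}_{L^1(\Omega)} \le C \norm{(V - V_k) v}_{L^1(\Omega; d_{\partial\Omega} \d{x})}
\]
by a duality argument in the spirit of Littman--Stampacchia--Weinberger: test $w_k$ against the solution $\zeta$ of the adjoint Dirichlet problem $- \Delta \zeta + V_k \zeta = \phi$ with arbitrary $\phi \in L^\infty(\Omega)$, and use the uniform barrier $\abs{\zeta} \le C \norm{\phi}_{L^\infty} d_{\partial\Omega}$. The paper instead applies Kato's inequality \eqref{eq:kato_inequality} to $w_k$ and $-w_k$, which yields $\int_\Omega \abs{w_k}(-\Delta\zeta + V_k\zeta)\d{x} \le \int_\Omega \zeta (V - V_k)\abs{v}\d{x}$ for \emph{nonnegative} test functions, and then tests once against the fixed smooth function $\theta$ solving $-\Delta\theta = 1$, $\theta = 0$ on $\partial\Omega$, discarding the term $V_k\theta \ge 0$ on the left. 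The crucial advantage of the paper's choice is that $\theta \in C_0^\infty(\cl\Omega)$, so no density argument is needed; your $\zeta$ is only $C^1(\cl\Omega) \cap W^{2,p}$, and the approximation by $C_0^\infty(\cl\Omega)$ test functions -- which you rightly flag as the main obstacle -- requires genuine work (the same mechanism as in the proof of \Cref{prop:distributional_solutions_are_duality_solutions}: mollify $\phi - V_k\zeta$, solve the Poisson problem, use the $C^1$ estimate \eqref{eq:C1_estimate}, and pass to the limit using that $v_k, v \in L^{p'}(\Omega)$ for $p' < \frac{N}{N-1}$ by \eqref{eq:Lp_estimate}). One small inaccuracy: your appeal to \eqref{eq:W1p_estimate} to conclude $w_k \in W_0^{1,q}(\Omega)$ is not justified, since the source term $(V - V_k)v$ lies only in $L^1(\Omega; d_{\partial\Omega}\d{x})$ and not in $L^1(\Omega)$; however, the density step can be run using the $L^p$-estimate \eqref{eq:Lp_estimate} alone, so this is not fatal. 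In short, both arguments produce the same $L^1$ contraction followed by dominated convergence, but the paper's Kato-inequality route reaches it with one explicit test function and no density step.
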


\begin{proof}[Proof of \Cref{lem:approximation_distributional_solutions}]
First notice that the function \(v - v_k\) satisfies
\[
\int_\Omega {} (v - v_k) (- \Delta \zeta + V_k \zeta) \d{x} = \int_\Omega (V_k - V) v \zeta \d{x} \quad \text{for every \(\zeta \in C_0^\infty(\cl\Omega)\).}
\]
Kato's inequality \eqref{eq:kato_inequality} applied to \(v - v_k\) and \(- (v - v_k)\) with \(\nu = 0\) implies that
\[
\int_\Omega {} \abs{v - v_k} (- \Delta \zeta + V_k \zeta) \d{x} \le \int_\Omega \sgn{(v - v_k)} (V_k - V) v  \zeta\d{x},
\]
and then
\[
\int_\Omega {} \abs{v - v_k} (-\Delta \zeta + V_k \zeta) \d{x} 
\le \int_\Omega {} \zeta (V - V_k) \abs{v} \d{x},
\]
for every \(\zeta \in C_0^\infty(\cl\Omega)\), \(\zeta \ge 0\) on \(\cl\Omega\). 
We take as test function the unique solution of
\begin{equation}
	\label{eq:test_function_Dirichlet}
\begin{dcases}
\begin{aligned}
- \Delta \theta &= 1 && \text{in \(\Omega\),} \\
\theta &= 0 && \text{on \(\partial\Omega\).}
\end{aligned}
\end{dcases}
\end{equation}
Observing that \(V_{k}\theta \ge 0\) and \(\theta \le \norm{\nabla \theta}_{L^\infty(\Omega)} d_{\partial\Omega}\) on \(\Omega\), we obtain the estimate
\begin{equation}
\label{eq:claim_L1_estimate}
\norm{v_k - v}_{L^1(\Omega)} \le \norm{\nabla \theta}_{L^\infty(\Omega)} \norm{(V - V_k) v}_{L^1(\Omega; d_{\partial\Omega} \d{x})}\,.
\end{equation}
Since \(0 \le V_k \le V\) and \(V v \in L^1(\Omega; d_{\partial\Omega} \d{x})\), Lebesgue's dominated convergence theorem implies that
\[
V_k v \to V v \quad \text{in \(L^1(\Omega; d_{\partial\Omega} \d{x})\).}
\]
The lemma then follows by letting \(k \to \infty\) in \eqref{eq:claim_L1_estimate}.
\end{proof}

\begin{lemma}
\label{prop:existence_bounded_potential}
Assume that \(V \in L^q(\Omega)\) for some \(q > N\). Then the boundary value problem \eqref{eq:boundary_value_problem} has a distributional solution for every \(\nu \in \cM(\partial\Omega)\). In particular, \(\Sigma = \emptyset\).
\end{lemma}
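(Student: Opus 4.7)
The plan is to approximate \(V\) by its truncations \(V_k = \min\{V, k\}\), construct a distributional solution for each bounded \(V_k\), and pass to the limit. By linearity it suffices to treat \(\nu \ge 0\). For each \(k\), I build \(v_k\) explicitly by writing \(v_k = h + u_k\), where \(h\) is the harmonic extension of \(\nu\) (so \(h \ge 0\), \(h \in L^p(\Omega)\) for every \(p < N/(N-1)\), and \(\int_\Omega h(-\Delta \zeta) \d x = \int_{\partial\Omega} \partial \zeta/\partial n \d\nu\) for \(\zeta \in C_0^\infty(\cl\Omega)\)), and \(u_k\) solves \(-\Delta u_k + V_k u_k = -V_k h\) in \(\Omega\) with \(u_k = 0\) on \(\partial\Omega\). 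Since \(V_k \in L^\infty(\Omega)\) and \(h \in L^1(\Omega)\), the right-hand side \(-V_k h\) lies in \(L^1(\Omega)\), so existence of \(u_k \in W_0^{1,p}(\Omega)\) follows from the standard theory recalled after \eqref{eq:W1p_estimate}. A direct integration by parts against \(\zeta \in C_0^\infty(\cl\Omega)\) (using \(u_k = 0 = \zeta\) on \(\partial\Omega\)) then shows that \(v_k\) distributionally solves \eqref{eq:boundary_value_problem} with potential \(V_k\) and datum \(\nu\).

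Next I apply \Cref{lem:comparison_principle} twice. First, with \((V_1, u_1) = (0, h)\) and \((V_2, u_2) = (V_k, v_k)\): both are nonnegative and share the same distributional boundary trace \(\nu\), so \(v_k \le h\). Second, with \((V_1, u_1) = (V_k, v_k)\) and \((V_2, u_2) = (V_{k+1}, v_{k+1})\) (same datum \(\nu\), with \(V_k \le V_{k+1}\)): one obtains \(v_{k+1} \le v_k\). The sequence \((v_k)\) is therefore nonincreasing and dominated by \(h \in L^1(\Omega)\), so \(v_k \downarrow v\) pointwise and in \(L^1(\Omega)\) by monotone/dominated convergence.

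The crucial technical point is the integrability \(V h \in L^1(\Omega)\). Since \(h \in L^p(\Omega)\) for every \(p < N/(N-1)\) and \(V \in L^q(\Omega)\) with \(q > N\), Hölder's inequality with \(1/r = 1/p + 1/q\) gives \(Vh \in L^r(\Omega)\); the strict inequality \(q > N\) is exactly what permits the choice of \(p < N/(N-1)\) for which \(1/p + 1/q < 1\), whence \(r > 1\) and in particular \(V h \in L^1(\Omega)\). Combining \(V_k v_k \le V v_k \le V h\) pointwise with \(V_k v_k \to V v\) pointwise, dominated convergence yields \(V_k v_k \to Vv\) in \(L^1(\Omega)\), hence also in \(L^1(\Omega; d_{\partial\Omega} \d x)\) since \(d_{\partial\Omega}\) is bounded.

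Having both \(v_k \to v\) in \(L^1(\Omega)\) and \(V_k v_k \to V v\) in \(L^1(\Omega; d_{\partial\Omega} \d x)\), I pass to the limit in
\[
\int_\Omega v_k (-\Delta \zeta + V_k \zeta) \d x = \int_{\partial\Omega} \frac{\partial \zeta}{\partial n} \d\nu \quad \text{for every } \zeta \in C_0^\infty(\cl\Omega),
\]
obtaining the same identity with \(v\) and \(V\) in place of \(v_k\) and \(V_k\). Hence \(v\) is the desired distributional solution of \eqref{eq:boundary_value_problem}. Applying this construction with \(\nu = \delta_a\) for each \(a \in \partial\Omega\) gives \(\Sigma = \emptyset\). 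The main obstacle is securing \(Vh \in L^1(\Omega)\): this is precisely where the subcritical threshold \(q > N\) is essential, and dropping it would force a more delicate argument exploiting the \(d_{\partial\Omega}\) weight instead of a simple dominated convergence.
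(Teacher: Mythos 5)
Your proof is correct but follows a genuinely different route from the paper's. The paper keeps \(V\) fixed and approximates the boundary datum \(\nu\) by smooth densities \(g_k\); existence of the approximate solutions \(v_k\) then has to be understood (it can be justified by a decomposition as in your argument), and the passage to the limit uses the uniform bound from \eqref{eq:Lp_estimate}, which places \((v_k)\) in a bounded set of \(L^{q'}(\Omega)\), together with a.e. convergence, so that \(v_k \weakto v\) weakly in \(L^{q'}(\Omega)\) and the term \(\int_\Omega V v_k \zeta\) passes to the limit by the duality pairing \(L^q\)--\(L^{q'}\). You instead keep \(\nu\) fixed and truncate \(V\), exploiting the decomposition \(v_k = h + u_k\) against the harmonic extension \(h\), the comparison principle (\Cref{lem:comparison_principle}) to obtain a nonincreasing sequence dominated by \(h\), and dominated convergence once \(Vh \in L^1(\Omega)\) is secured. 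In both arguments the hypothesis \(q > N\) is used precisely in the form \(q' < \frac{N}{N-1}\), but the mechanism differs: for the paper it provides weak compactness in \(L^{q'}\); for you it provides the integrability \(Vh \in L^1(\Omega)\) (note that you only need the exponent \(r\) in your H\"older step to satisfy \(r \ge 1\), which is achieved already by taking \(p = q'\); the strict inequality \(r > 1\) is unnecessary). Your approach has the merit of staying closer to the truncation machinery \((V_k)\) developed elsewhere in the paper and of producing an explicit majorant that makes the limit transparent, while the paper's version is shorter if one is willing to take the existence of the approximating solutions for granted.
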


We recall that if \(v \in L^1(\Omega)\), \(f \in L^1(\Omega; d_{\partial\Omega} \d{x})\) and \(\nu \in \cM(\partial\Omega)\) satisfy
\[
- \int_\Omega v \Delta \zeta \d{x} = \int_\Omega f \zeta \d{x} + \int_{\partial\Omega} \frac{\partial \zeta}{\partial n} \d\nu \quad \text{for every \(\zeta \in C_0^\infty(\cl\Omega)\),}
\]
which is the weak formulation of \(v\) being a solution of
\[
\begin{dcases}
\begin{aligned}
- \Delta v &= f && \text{in \(\Omega\),} \\
v &= \nu && \text{on \(\partial\Omega\),}
\end{aligned}
\end{dcases}
\]
then
\begin{enumerate}[(i)]
\item for every \(1 \le p < \frac{N}{N - 1}\), we have \(v \in L^p(\Omega)\) and there exists a constant \(C > 0\) depending on \(p\) and \(\Omega\) such that
\begin{equation}
\label{eq:Lp_estimate}
\norm{v}_{L^p(\Omega)} \le C \paren[\big]{\norm{f}_{L^1(\Omega; d_{\partial\Omega} \d{x})} + \norm{\nu}_{\cM(\partial\Omega)}}\,;
\end{equation}
\item for every \(1 \le p < \frac{N}{N - 1}\) and every \(\omega \Subset \Omega\), we have \(\nabla v \in L^p(\omega; \R^N)\) and there exists a constant \(C' > 0\) depending on \(p\) and \(\omega\) such that
\begin{equation}
\label{eq:W1p_loc_estimate}
\norm{\nabla v}_{L^p(\omega; \R^N)} \le C' \paren[\big]{\norm{f}_{L^1(\Omega; d_{\partial\Omega} \d{x})} + \norm{\nu}_{\cM(\partial\Omega)}}.
\end{equation}
\end{enumerate}
We refer the reader to \cite{MarcusVeron:2014}*{Theorem~1.2.2} for a proof of these assertions.

\begin{proof}[Proof of \Cref{prop:existence_bounded_potential}]
Let \((g_k)\) be a sequence of smooth functions on \(\partial\Omega\) such that \(\norm{g_k}_{L^1(\partial\Omega)} \to \norm{\nu}_{\cM(\partial\Omega)}\) and \(g_k \weakstarto \nu\) \weakstartext{} in \(\cM(\partial\Omega)\), i.e.,
\[
\lim_{k \to \infty} \int_{\partial\Omega} \phi g_k \d{x} = \int_{\partial\Omega} \phi \d\nu \quad \text{for every \(\phi \in C(\partial\Omega)\).}
\]
Such a sequence can be obtained, for example, from a convolution of \(\nu\) with a sequence of mollifiers. Denote by \(v_k\) the distributional solution of \eqref{eq:boundary_value_problem} associated to \(g_k\). Given \(\omega \Subset \Omega\), we deduce from \eqref{eq:Lp_estimate} and \eqref{eq:W1p_loc_estimate} that
\[
\norm{v_k}_{W^{1, 1}(\omega)} \le C_1 \paren[\big]{\norm{V v_k}_{L^1(\Omega; d_{\partial\Omega} \d{x})} + \norm{g_k}_{L^1(\partial\Omega)}}
\]
for some constant \(C_1 > 0\) depending on \(\omega\). Taking a subsequence if necessary, we have
\begin{equation}
\label{eq:a.e._convergence}
v_k \to v \quad \text{almost everywhere on \(\Omega\).}
\end{equation}
On the other hand, since \(q' < \frac{N}{N - 1}\), we deduce from \eqref{eq:Lp_estimate} that
\[
\norm{v_k}_{L^{q'}(\Omega)} \le C_2 \paren[\big]{\norm{V v_k}_{L^1(\Omega; d_{\partial\Omega} \d{x})} + \norm{g_k}_{L^1(\partial\Omega)}}
\]
for some constant \(C_2 > 0\) depending on \(q\) and \(\Omega\). Hence \((v_k)\) is bounded in \(L^{q'}(\Omega)\). This, together with \eqref{eq:a.e._convergence}, implies that
\[
v_k \weakto v \quad \text{weakly in \(L^{q'}(\Omega)\).}
\]
Recalling that \(V \in L^q(\Omega)\) and taking the limit as \(k \to \infty\) in the equation
\[
\int_\Omega v_k (- \Delta \zeta + V \zeta) \d{x} = \int_\Omega \frac{\partial \zeta}{\partial n} g_k \d\sigma \quad \text{for every \(\zeta \in C_0^\infty(\cl\Omega)\),}
\]
we get the conclusion.
\end{proof}

We now turn to the

\begin{proof}[Proof of \Cref{prop:distributional_solutions_are_duality_solutions}]
We first assume that \(V\) is bounded. In this case, \(\zeta_f \in C_{0}^1(\cl\Omega)\) for every \(f \in L^\infty(\Omega)\). 
Since \(\zeta_f\) need not be smooth enough to be used as test function for \(v\), we approximate \(\zeta_f\) in \(C^1(\cl\Omega)\) by a sequence \((\zeta_{f_k})\) in \(C_0^\infty(\cl\Omega)\), where \((f_k)\) is a bounded sequence in \(L^\infty(\Omega)\) such that \(f_k \to f\) almost everywhere on \(\Omega\). For this purpose, we follow the construction given in \cite{OrsinaPonce:2019}: for each \(k\) we define the function \(g_k = \rho_k * g\), where \(g = f - V \zeta_f\) and \((\rho_k)\) is a sequence of mollifiers, and we denote by \(w_k \in C_0^\infty(\cl\Omega)\) the solution of
\[
\begin{dcases}
\begin{aligned}
- \Delta w_k &= g_k && \text{in \(\Omega\),} \\
w_k &= 0 && \text{on \(\partial\Omega\).}
\end{aligned}
\end{dcases}
\]
Observe that \(w_k = \zeta_{f_k}\) with \(f_k = g_k + V w_k\). Moreover, estimate \eqref{eq:C1_estimate} ensures that for some fixed \(N < p < \infty\),
\[
\norm{\zeta_{f_k} - \zeta_f}_{C^1(\cl\Omega)} \le C \norm{g_k - g}_{L^p(\Omega)}.
\]
Letting \(k \to \infty\) in this estimate, we have
\[
\zeta_{f_k} \to \zeta_f \quad \text{uniformly on \(\Omega\)} \quad \text{and} \quad \frac{\partial \zeta_{f_k}}{\partial n} \to \frac{\partial \zeta_f}{\partial n} \quad \text{uniformly on \(\partial\Omega\).}
\]
Since
\[
\int_\Omega v f_k \d{x} = \int_\Omega v (- \Delta \zeta_{f_k} + V \zeta_{f_k}) \d{x} = \int_{\partial\Omega} \frac{\partial \zeta_{f_k}}{\partial n} \d\nu,
\]
taking the limit as \(k \to \infty\), we obtain
\[
\int_\Omega  v f \d{x} = \int_{\partial\Omega} \frac{\partial \zeta_f}{\partial n} \d\nu.
\]

In the general case where \(V \in L_\loc^1(\Omega)\), let \(v_k\) be the solution of
\[
\begin{dcases}
\begin{aligned}
- \Delta v_k + V_k v_k &= 0 && \text{in \(\Omega\),} \\
v_k &= \nu && \text{on \(\partial\Omega\),}
\end{aligned}
\end{dcases}
\]
whose existence is ensured by \Cref{prop:existence_bounded_potential}. Given \(f \in L^\infty(\Omega)\), we denote by \(z_k \in C_0^1(\cl\Omega)\) the solution of
\[
\begin{dcases}
\begin{aligned}
- \Delta z_k + V_k z_k &= f && \text{in \(\Omega\),} \\
z_k &= 0 && \text{on \(\partial\Omega\).}
\end{aligned}
\end{dcases}
\]
It follows from the first part of the proof that
\[
\int_\Omega v_k f \d{x} = \int_{\partial\Omega} \frac{\partial z_k}{\partial n} \d\nu.
\]
\Cref{prop:pointwise_normal_derivative} implies that \(\paren{\partial z_k / \partial n}\) is uniformly bounded and converges pointwise to \(\widehat{\partial \zeta_f} / \partial n\) on \(\partial\Omega\). By \Cref{lem:approximation_distributional_solutions}, we obtain
\[
\int_\Omega v f \d{x} = \int_{\partial\Omega} \frac{\widehat{\partial \zeta_f}}{\partial n} \d\nu.\qedhere
\]
\end{proof}

\section{Pointwise normal derivative on the exceptional set \(\Sigma\)}
\label{sec:normal_derivative_on_sigma}

In this section, we characterize the exceptional boundary set \(\Sigma\) using the pointwise normal derivative with respect to the Schr\"odinger operator \(- \Delta + V\) introduced in \Cref{sec:pointwise_normal_derivative}. We prove that

\begin{proposition}
\label{prop:characterization_sigma}
For every \(a \in \partial\Omega\), we have \(a \in \Sigma\) if and only if
\[
\frac{\widehat{\partial \zeta_f}}{\partial n}(a) = 0 \quad \text{for every \(f \in L^\infty(\Omega)\).}
\]
\end{proposition}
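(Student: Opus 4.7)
My plan is to treat the two implications separately, relying on the duality formulation from \Cref{sec:boundary_value_problem} and the approximation \(V_k \uparrow V\) by bounded potentials. Throughout, write \(\theta\) for the solution of \eqref{eq:test_function_Dirichlet}; then \(\theta \in C_0^\infty(\cl\Omega)\) and \(\partial\theta/\partial n(a) > 0\) for every \(a \in \partial\Omega\) by the classical Hopf lemma applied to \(-\Delta\theta = 1\).

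For the direction (\(\Leftarrow\)), I argue by contraposition: if \eqref{eq:boundary_value_problem_dirac} admits a distributional solution \(v\), then \Cref{prop:distributional_solutions_are_duality_solutions} ensures that \(v\) is also a duality solution. The hypothesis \(\widehat{\partial\zeta_f}/\partial n(a) = 0\) for every \(f \in L^\infty(\Omega)\) then forces \(\int_\Omega v f \d x = 0\) for every such \(f\), and hence \(v \equiv 0\) a.e. Returning to the distributional identity with the test function \(\theta\) yields \(\partial\theta/\partial n(a) = 0\), contradicting the classical Hopf lemma. Therefore \(a \in \Sigma\).

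For the direction (\(\Rightarrow\)), assume \(a \in \Sigma\) and take the bounded approximation \(V_k = \min\{V, k\}\). By \Cref{prop:existence_bounded_potential}, the boundary value problem with potential \(V_k\) and datum \(\delta_a\) admits a distributional solution \(v_k \ge 0\); \Cref{lem:comparison_principle} gives \(v_{k+1} \le v_k\), so \(v_k \downarrow v^*\) in \(L^1(\Omega)\) and a.e.\ by monotone convergence (dominated by \(v_1 \in L^1(\Omega)\)). Applying \Cref{prop:distributional_solutions_are_duality_solutions} to the approximating problem, we obtain \(\int_\Omega v_k f \d x = \partial u_k/\partial n(a)\), where \(u_k\) solves \eqref{eq:dirichle_proposition_pointwise}. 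Passing to the limit---\(L^1\)-convergence of \(v_k\) on the left and \Cref{prop:pointwise_normal_derivative} on the right---yields
\[
\int_\Omega v^* f \d x = \frac{\widehat{\partial \zeta_f}}{\partial n}(a) \quad \text{for every \(f \in L^\infty(\Omega)\).}
\]
The proof thus reduces to showing \(v^* \equiv 0\).

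To this end, I plan to pass to the limit in the distributional identity
\[
\int_\Omega v_k(-\Delta\zeta) \d x + \int_\Omega V_k v_k \zeta \d x = \frac{\partial\zeta}{\partial n}(a), \qquad \zeta \in C_0^\infty(\cl\Omega).
\]
The first integral converges by \(L^1\)-convergence of \(v_k\). For the second, Fatou's lemma combined with the choice \(\zeta = \theta\) provides both \(V v^* \in L^1(\Omega; d_{\partial\Omega} \d x)\) and a uniform bound on the total variation of \(V_k v_k \d x\) weighted by \(d_{\partial\Omega}\). Extracting a weak-\(*\) subsequential limit on \(\cl\Omega\), and using the pointwise bound \(v_k \le K(\cdot, a)\) (from the weak maximum principle for \(- \Delta\) applied to \(K(\cdot, a) - v_k\)) to localize the boundary part of the limiting measure at \(a\), the limit identity takes the form
\[
\int_\Omega v^*(-\Delta\zeta + V\zeta) \d x = (1 - c)\, \frac{\partial\zeta}{\partial n}(a), \qquad \zeta \in C_0^\infty(\cl\Omega),
\]
for some constant \(c \in [0, 1]\). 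If \(c < 1\), then \(v^*/(1-c)\) would be a distributional solution of \eqref{eq:boundary_value_problem_dirac}, contradicting \(a \in \Sigma\); hence \(c = 1\), and testing with \(\zeta = \theta\) yields \(\int_\Omega v^* \d x + \int_\Omega V v^* \theta \d x = 0\), forcing \(v^* \equiv 0\). The main obstacle in this plan is precisely the identification of the weak-\(*\) limit of \((V_k v_k)\): confirming that the loss of mass in the absorption term concentrates at the single boundary point \(a\) rather than anywhere else on \(\partial\Omega\).
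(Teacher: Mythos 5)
Your proof of the direction $(\Leftarrow)$ is correct and is essentially a contrapositive rephrasing of what the paper does: you identify the distributional solution $v$ of \eqref{eq:boundary_value_problem_dirac} with the duality solution (via \Cref{prop:distributional_solutions_are_duality_solutions}), use the hypothesis to conclude $v=0$, and then reach a contradiction by testing with $\theta$. The paper phrases this via $P_a$ and its representation formula \eqref{eq:representation_formula}, but the content is the same.

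The direction $(\Rightarrow)$ has a genuine gap, and you correctly identify where it is. Your strategy of passing to the limit in
\[
\int_\Omega v_k(-\Delta\zeta)\d x + \int_\Omega V_k v_k \zeta\d x = \frac{\partial\zeta}{\partial n}(a)
\]
and extracting a weak-\(*\) limit of $(V_k v_k\theta\d x)$ in $\cM(\cl\Omega)$ exactly parallels the paper's proof of \Cref{prop:duality_solutions_are_distributional_solutions}. The sticking point is the claim that the boundary part of the limiting measure is supported at the single point $a$. The bound $v_k\le K(\cdot,a)$ does not deliver this: away from $a$ one has $v_k\theta \lesssim d_{\partial\Omega}^2$, so $V_k v_k\theta \lesssim V_k d_{\partial\Omega}^2$, and there is no a priori $L^1$ bound on $V_k d_{\partial\Omega}^2$ since $V$ is merely in $L^1_{\loc}(\Omega)$. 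Without uniform integrability near $\partial\Omega\setminus\{a\}$, nothing prevents the absorption term from losing mass at other boundary points. Thus you cannot conclude that the limit identity has the form $\int_\Omega v^*(-\Delta\zeta+V\zeta)\d x = (1-c)\,\partial\zeta(a)/\partial n$ with a scalar $c$; a priori you only obtain a datum $\delta_a - \lambda$ with $\lambda\in\cM(\partial\Omega)$ nonnegative but of unknown support. This is precisely the content of \Cref{prop:duality_solutions_are_distributional_solutions}.

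What closes the argument in the paper is a different tool: the inverse maximum principle (\Cref{lem:inverse_maximum_principle}, from \cite{BrezisPonce:2005}). Since the duality solution $P_a$ is nonnegative and is a distributional solution of \eqref{eq:boundary_value_problem} with datum $\delta_a-\lambda$, one applies \Cref{lem:inverse_maximum_principle} with $h=VP_a$ and $\nu=\delta_a-\lambda$ to deduce $\delta_a-\lambda\ge 0$. Combined with $\lambda\ge 0$, this forces $\lambda=\alpha\delta_a$ for some $\alpha\in[0,1]$, which is exactly the concentration you were trying to prove directly. From there the argument proceeds as you sketched ($\alpha=1$ by the assumption $a\in\Sigma$, then test with $\theta$ to get $P_a\equiv 0$). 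So the missing ingredient in your plan is this inverse maximum principle; replacing the attempted localization by an appeal to it would make your proof complete and essentially coincide with the paper's.
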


As a fundamental property that is used in the proofs of \Cref{prop:characterization_sigma} and \Cref{thm:characterization_good_measures}, we first extend \Cref{lem:approximation_distributional_solutions} to the case where \eqref{eq:boundary_value_problem} need not have a distributional solution.

\begin{proposition}
\label{prop:duality_solutions_are_distributional_solutions}
Let \(\nu \in \cM(\partial\Omega)\) be a nonnegative measure and let \(v\) be the duality solution of \eqref{eq:boundary_value_problem} associated to \(\nu\). We have that
\begin{enumerate}[(i)]
\item if \(v_k\) is the distributional solution of \eqref{eq:dirichlet_problem_approx}, then \(v_k \to v\) in \(L^1(\Omega)\)\,;
\item there exists a nonnegative measure \(\lambda \in \cM(\partial\Omega)\) such that \(v\) is the distributional solution of \eqref{eq:boundary_value_problem} associated to \(\nu - \lambda\).
\end{enumerate}
\end{proposition}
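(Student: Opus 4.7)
The plan is to exploit the monotonicity from the comparison principle (\Cref{lem:comparison_principle}) and, for part (ii), to pass to the limit in the distributional equation using a weak-\(*\) compactness argument in \(\cM(\cl\Omega)\).

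For assertion (i), since \((V_k)\) is non-decreasing, \Cref{lem:comparison_principle} gives \(v_{k+1} \le v_k\) almost everywhere, so \((v_k)\) decreases to some nonnegative \(v_\infty\). I would establish a uniform \(L^1\) bound by testing the distributional equation for \(v_k\) against the smooth function \(\theta\) defined in \eqref{eq:test_function_Dirichlet}: since \(V_k, v_k, \theta \ge 0\), this gives
\[
\int_\Omega v_k \,\d{x} + \int_\Omega V_k v_k \theta \,\d{x} = \int_{\partial\Omega} \frac{\partial\theta}{\partial n}\,\d\nu \le \norm{\nabla\theta}_{L^\infty(\Omega)} \norm{\nu}_{\cM(\partial\Omega)},
\]
which, together with \(\theta \le \norm{\nabla\theta}_{L^\infty(\Omega)} d_{\partial\Omega}\), also yields \(\sup_k \norm{V_k v_k}_{L^1(\Omega; d_{\partial\Omega}\,\d{x})} < \infty\). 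Monotone convergence then gives \(v_k \to v_\infty\) in \(L^1(\Omega)\). To identify \(v_\infty\) with the duality solution \(v\), I would pass to the limit in the duality formula: since \(V_k \in L^\infty(\Omega)\), \Cref{prop:distributional_solutions_are_duality_solutions} applied to \(v_k\) yields, for each \(f \in L^\infty(\Omega)\) and \(z_k \in C_0^1(\cl\Omega)\) solving \eqref{eq:dirichlet_problem} with potential \(V_k\) and datum \(f\), the identity \(\int_\Omega v_k f\,\d{x} = \int_{\partial\Omega} \partial z_k/\partial n\,\d\nu\); invoking \Cref{prop:pointwise_normal_derivative} and dominated convergence,
\[
\int_\Omega v_\infty f\,\d{x} = \int_{\partial\Omega} \frac{\widehat{\partial \zeta_f}}{\partial n}\,\d\nu = \int_\Omega v f \,\d{x}
\]
for every \(f \in L^\infty(\Omega)\), so \(v_\infty = v\) almost everywhere.

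For assertion (ii), I would pass to the limit in the distributional equation for \(v_k\). The term \(\int_\Omega v_k \Delta \zeta\,\d{x}\) converges to \(\int_\Omega v \Delta \zeta\,\d{x}\) by step (i), so \(\int_\Omega V_k v_k \zeta\,\d{x}\) also converges for every \(\zeta \in C_0^\infty(\cl\Omega)\). To identify this limit, I would extract a weak-\(*\) limit \(\mu^* \in \cM(\cl\Omega)\) of \(V_k v_k d_{\partial\Omega}\,\d{x}\), whose total variation is uniformly bounded thanks to step (i). Comparing \(v_k\) to the harmonic extension \(w\) of \(\nu\) through \Cref{lem:comparison_principle} yields \(v_k \le w\) almost everywhere, and since \(w \in L^\infty_{\loc}(\Omega)\) by interior regularity, the domination \(V_k v_k \le V w \in L^1_{\loc}(\Omega)\) allows Lebesgue's theorem and gives \(V_k v_k \to V v\) in \(L^1_{\loc}(\Omega)\). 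Hence \(\mu^* = V v\, d_{\partial\Omega}\,\d{x} + \lambda\) with \(\lambda \ge 0\) supported on \(\partial\Omega\). Since \(\zeta/d_{\partial\Omega}\) extends to a continuous function on \(\cl\Omega\) with boundary value \(\partial\zeta/\partial n\), testing \(\mu^*\) against this function yields
\[
\lim_k \int_\Omega V_k v_k \zeta\,\d{x} = \int_\Omega V v \zeta\,\d{x} + \int_{\partial\Omega} \frac{\partial\zeta}{\partial n}\,\d\lambda,
\]
and substituting into the limit of the distributional equation shows that \(v\) is a distributional solution with datum \(\nu - \lambda\), with the required integrability \(V v \in L^1(\Omega; d_{\partial\Omega}\,\d{x})\) coming from Fatou's lemma.

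The main obstacle is the decomposition \(\mu^* = V v\, d_{\partial\Omega}\,\d{x} + \lambda\) with \(\lambda\) supported on \(\partial\Omega\): this requires the \(L^1_{\loc}\) convergence of \(V_k v_k\) in the interior of \(\Omega\), which is where the harmonic majorant \(w\) plays a crucial role by furnishing a \(k\)-independent \(L^\infty_{\loc}\) bound. The uniqueness of \(\lambda\) (hence convergence of the full sequence, not just a subsequence) will follow automatically since \(\int_{\partial\Omega} \partial\zeta/\partial n \,\d\lambda\) is prescribed by the above identity for every \(\zeta \in C_0^\infty(\cl\Omega)\), and the normal derivatives of such \(\zeta\) span a dense subspace of \(C(\partial\Omega)\).
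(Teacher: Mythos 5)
Your proposal is correct and follows essentially the same strategy as the paper: part (i) by monotonicity plus passage to the limit in the duality identity using \Cref{prop:distributional_solutions_are_duality_solutions} and \Cref{prop:pointwise_normal_derivative}, and part (ii) by a weak-\(*\) compactness argument decomposing the limit measure into an interior part identified by local \(L^1\) convergence and a nonnegative boundary part \(\lambda\). The only differences are cosmetic: where you dominate \(V_k v_k\) by \(Vw\) with \(w\) the harmonic extension of \(\nu\), the paper uses \(Vv_1\) with \(v_1\) the first (subharmonic, hence locally bounded) approximant; and where you weight by \(d_{\partial\Omega}\) and test against \(\zeta/d_{\partial\Omega}\), the paper weights by the torsion function \(\theta\) from \eqref{eq:test_function_Dirichlet} and tests against \(\zeta/\theta\) — these choices are interchangeable since \(\theta\) is comparable to \(d_{\partial\Omega}\) by the classical Hopf lemma, and both quotients extend continuously to \(\cl\Omega\).
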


We recall the following estimate whose proof is sketched for the convenience of the reader:

\begin{lemma}
\label{lem:L1_estimate}
If \(v\) is a distributional solution of \eqref{eq:boundary_value_problem} with \(\nu \in \cM(\partial\Omega)\), then
\[
\norm{v}_{L^1(\Omega)} + \norm{V v}_{L^1(\Omega; d_{\partial\Omega} \d{x})} \le C \norm{\nu}_{\cM(\partial\Omega)}
\]
for some constant \(C > 0\) depending on \(\Omega\).
\end{lemma}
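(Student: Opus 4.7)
The plan is to derive the estimate from Kato's inequality \eqref{eq:kato_inequality} applied to $v$ and to $-v$, and then to test the resulting inequality against the (smooth) torsion function $\theta$ solving \eqref{eq:test_function_Dirichlet}. The definition of a distributional solution of \eqref{eq:boundary_value_problem} places $v$, $h \defeq - V v \in L^1(\Omega; d_{\partial\Omega} \d{x})$ and $\nu$ exactly in the setting of the identity \eqref{eq:identity_boundary}, so \eqref{eq:kato_inequality} is directly applicable.

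Applied to $w = v$ with $h = -Vv$, and noting that on the set $\set{v \ge 0}$ one has $h = -V v^+$, inequality \eqref{eq:kato_inequality} reads
\[
- \int_\Omega v^+ \Delta \zeta \d{x} + \int_\Omega V v^+ \zeta \d{x} \le \int_{\partial\Omega} \frac{\partial \zeta}{\partial n} \d\nu^+
\]
for every nonnegative $\zeta \in C_0^\infty(\cl\Omega)$. The same argument applied to $w = -v$ yields the companion inequality with $v^-$ and $\nu^-$ in place of $v^+$ and $\nu^+$. Summing the two and using $\abs{v} = v^+ + v^-$ and $\abs{\nu} = \nu^+ + \nu^-$ gives
\[
\int_\Omega \abs{v} (- \Delta \zeta) \d{x} + \int_\Omega V \abs{v} \zeta \d{x} \le \int_{\partial\Omega} \frac{\partial \zeta}{\partial n} \d\abs{\nu}.
\]

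The second step is to substitute $\zeta = \theta$, which belongs to $C_0^\infty(\cl\Omega)$ by the smoothness of $\partial\Omega$. Since $\theta = 0$ on $\partial\Omega$ and $\partial \theta / \partial n > 0$ pointwise on $\partial\Omega$ by the classical Hopf lemma for the Laplacian, a compactness argument provides a constant $c > 0$ depending only on $\Omega$ such that $\theta \ge c \, d_{\partial\Omega}$ on $\Omega$. Setting $M \defeq \norm{\partial \theta / \partial n}_{L^\infty(\partial\Omega)}$, the previous inequality becomes
\[
\int_\Omega \abs{v} \d{x} + c \int_\Omega V \abs{v} \, d_{\partial\Omega} \d{x} \le M \norm{\nu}_{\cM(\partial\Omega)},
\]
which yields the claim with $C = M \max \set{1, 1/c}$ depending only on $\Omega$.

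No step is genuinely hard here; this is a standard a priori estimate for solutions with measure boundary data. The one point requiring care is to retain the absorption term $V \abs{v}$ when combining the two Kato inequalities, rather than discarding it together with the sign of $-Vv$ on each half-plane; and to recall that the smooth torsion function $\theta$ from \eqref{eq:test_function_Dirichlet} satisfies $\theta \gtrsim d_{\partial\Omega}$ on $\Omega$, which is what converts the boundary Kato inequality into the weighted absorption estimate on the left-hand side.
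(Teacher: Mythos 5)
Your proof is correct and follows essentially the same route as the paper: apply Kato's inequality \eqref{eq:kato_inequality} to $w = v$ and $w = -v$, sum to obtain the inequality for $\abs{v}$, and test against the torsion function $\theta$ of \eqref{eq:test_function_Dirichlet}, using $\theta \ge c\, d_{\partial\Omega}$. The only difference is that you spell out the positive and negative part computations, while the paper states the combined inequality for $\abs{v}$ directly.
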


\begin{proof}[Proof of \Cref{lem:L1_estimate}]
One deduces using Kato's inequality \eqref{eq:kato_inequality} with \(w = v\) and \(w = -v\) that
\[
\int_\Omega \abs{v} (-\Delta \zeta + V\zeta) \d{x} \le \norm[\bigg]{\frac{\partial \zeta}{\partial n}}_{L^\infty(\partial\Omega)} \norm{\nu}_{\cM(\partial\Omega)}
\]
for every \(\zeta \in C_0^\infty(\cl\Omega)\), \(\zeta \ge 0\) on \(\cl\Omega\). 
Take as test function the solution \(\theta\) of \eqref{eq:test_function_Dirichlet}.
As a consequence of the classical Hopf lemma, there exists \(C_1 > 0\) such that \(\theta \ge C_1 d_{\partial\Omega}\) on \(\Omega\). 
Therefore, by nonnegativity of \(V\),
\[
\norm{v}_{L^1(\Omega)} + C_1 \norm{V v}_{L^1(\Omega; d_{\partial\Omega} \d{x})} \le \norm[\bigg]{\frac{\partial \theta}{\partial n}}_{L^\infty(\partial\Omega)} \norm{\nu}_{\cM(\partial\Omega)}.
\qedhere
\]
\end{proof}

\begin{proof}[Proof of \Cref{prop:duality_solutions_are_distributional_solutions}]
By a straightforward counterpart of \Cref{lem:weak_maximum_principle,lem:comparison_principle} for distributional solutions of \eqref{eq:boundary_value_problem}, the sequence \((v_k)\) is nonnegative and non-increasing. Hence \((v_k)\) converges in \(L^1(\Omega)\) to some nonnegative function \(w\). Let \(f \in L^\infty(\Omega)\) and let \(u_k\) be the solution of \eqref{eq:dirichle_proposition_pointwise}.
\Cref{prop:distributional_solutions_are_duality_solutions} implies that
\[
\int_\Omega v_k f \d{x} = \int_{\partial\Omega} \frac{\partial u_k}{\partial n} \d\nu.
\]
By \Cref{prop:pointwise_normal_derivative}, the sequence \(\paren{\partial u_k / \partial n}\) is uniformly bounded and converges pointwise to \(\widehat{\partial \zeta_f} / \partial n\) on \(\partial\Omega\). Taking the limit as \(k \to \infty\) in the identity above, we deduce from Lebesgue's dominated convergence theorem that
\[
\int_\Omega w f \d{x} = \int_{\partial\Omega} \frac{\widehat{\partial \zeta_f}}{\partial n} \d\nu \quad \text{for every \(f \in L^\infty(\Omega)\).}
\]
We have thus proved that \(w\) is a duality solution of \eqref{eq:boundary_value_problem} involving \(\nu\). By uniqueness of duality solutions, we have \(v = w\).

For every \(k \ge 1\), we have
\[
0 \le V_k v_k \le V v_1 \quad \text{almost everywhere on \(\Omega\).}
\]
Since \(v_{1}\) is subharmonic, it is locally bounded on \(\Omega\)\,; see \cite{Willem:2013}*{Theorem~8.1.5}.
Then, Lebesgue's dominated convergence theorem implies that
\[
V_k v_k \to V v \quad \text{in \(L_\loc^1(\Omega)\).}
\]
Let \(\theta\) be the unique solution of \eqref{eq:test_function_Dirichlet}.
Since \(0 < \theta \le \norm{\nabla \theta}_{L^\infty(\Omega)} d_{\partial\Omega}\) on \(\Omega\), by \Cref{lem:L1_estimate} the sequence \((V_k v_k \theta)\) is bounded in \(L^1(\Omega)\). Therefore, taking a subsequence if necessary, we may assume that there exist nonnegative finite Borel measures \(\mu\) on \(\Omega\) and \(\tau\) on \(\partial\Omega\) such that, for every \(\psi \in C^\infty(\cl\Omega)\),
\begin{equation}
\label{eq:decomposition_limit}
\lim_{k \to \infty} {} \int_\Omega V_k v_k \theta \psi \d{x} = \int_\Omega \psi \d\mu + \int_{\partial\Omega} \psi \d\tau.
\end{equation}
On the other hand, for every \(\varphi \in C_c^\infty(\Omega)\),
\[
\lim_{k \to \infty} {} \int_{\Omega} V_k v_k \theta \varphi \d{x} = \int_\Omega V v \theta \varphi \d{x}.
\]
Hence \(\mu = V v \theta \d{x}\). Given \(\zeta \in C_0^\infty(\cl\Omega)\), we define \(\gamma = \zeta / \theta\) on \(\Omega\). Since \(\partial \theta / \partial n > 0\) on \(\partial\Omega\), the function \(\gamma\) extends continuously to \(\partial\Omega\), and
\[
\gamma = \frac{\partial \zeta}{\partial n} \frac{1}{\frac{\partial \theta}{\partial n}} \quad \text{on \(\partial\Omega\).}
\]
Taking \(\psi = \gamma\) in \eqref{eq:decomposition_limit}, we obtain
\[
\lim_{k \to \infty} {} \int_\Omega V_k v_k \zeta \d{x} = \int_\Omega V v \zeta \d{x} + \int_{\partial\Omega} \frac{\partial \zeta}{\partial n} \frac{1}{\frac{\partial \theta}{\partial n}} \d\tau \quad \text{for every \(\zeta \in C_0^\infty(\cl\Omega)\).}
\]
The result follows with \(\lambda = \dfrac{1}{\frac{\partial \theta}{\partial n}} \tau\).
\end{proof}

Another ingredient involved in the proof of \Cref{prop:characterization_sigma} is the inverse maximum principle for distributional solutions of \eqref{eq:boundary_value_problem}; see \cite{BrezisPonce:2005}*{Lemma~1}.

\begin{lemma}
\label{lem:inverse_maximum_principle}
Let \(\nu \in \cM(\partial\Omega)\) and let \(h \in L^1(\Omega; d_{\partial\Omega} \d{x})\). Assume that \(v \in L^1(\Omega)\) satisfies
\[
- \int_\Omega v \Delta \zeta \d{x} = \int_\Omega h \zeta \d{x} + \int_{\partial\Omega} \frac{\partial \zeta}{\partial n} \d\nu \quad \text{for every \(\zeta \in C_0^\infty(\cl\Omega)\).}
\]
If \(v \ge 0\) almost everywhere on \(\Omega\), then \(\nu \ge 0\) on \(\partial\Omega\).
\end{lemma}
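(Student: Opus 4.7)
My plan is to show that $\int_{\partial\Omega} \phi \d\nu \ge 0$ for every nonnegative $\phi \in C^\infty(\partial\Omega)$; by regularity of Borel measures this is enough to conclude $\nu \ge 0$. The strategy is to test the hypothesized identity against functions concentrated in a thin layer near $\partial\Omega$, so that in the limit only the boundary-layer behaviour of $v$ contributes and the sign assumption $v \ge 0$ can be exploited.

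To this end, I would first construct a nonnegative test function whose normal derivative recovers $\phi$. Let $\theta$ be the solution of \eqref{eq:test_function_Dirichlet}; by the classical Hopf lemma, $\partial\theta/\partial n > 0$ on $\partial\Omega$, so the function $\phi/(\partial\theta/\partial n)$ extends to some nonnegative $\tilde\phi \in C^\infty(\cl\Omega)$, and $\zeta_0 \defeq \theta \tilde\phi \in C_0^\infty(\cl\Omega)$ is nonnegative, bounded by $C d_{\partial\Omega}$, and satisfies $\partial\zeta_0/\partial n = \phi$ on $\partial\Omega$. I would then localize $\zeta_0$ by setting $\zeta_\epsilon \defeq \zeta_0 \eta_\epsilon$, where $\eta_\epsilon(x) = \eta(d_{\partial\Omega}(x)/\epsilon)$ and $\eta : [0,\infty) \to [0,1]$ is smooth with $\eta \equiv 1$ on $[0,1]$ and $\supp \eta \subset [0,2]$. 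The resulting $\zeta_\epsilon \in C_0^\infty(\cl\Omega)$ is nonnegative, supported in $\{d_{\partial\Omega} \le 2\epsilon\}$, and still satisfies $\partial\zeta_\epsilon/\partial n = \phi$ on $\partial\Omega$.

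Substituting $\zeta_\epsilon$ into the hypothesized identity yields
\[
\int_{\partial\Omega} \phi \d\nu = -\int_\Omega v \Delta\zeta_\epsilon \d x - \int_\Omega h \zeta_\epsilon \d x.
\]
As $\epsilon \to 0$, the integral $\int_\Omega h\zeta_\epsilon \d x$ vanishes by Lebesgue's dominated convergence, since $\abs{h\zeta_\epsilon} \le C \abs{h} d_{\partial\Omega} \in L^1(\Omega)$; expanding $\Delta\zeta_\epsilon = \eta_\epsilon \Delta\zeta_0 + 2 \nabla\eta_\epsilon \cdot \nabla\zeta_0 + \zeta_0 \Delta\eta_\epsilon$, the contribution $-\int_\Omega v \eta_\epsilon \Delta\zeta_0 \d x$ also tends to $0$ since $v \Delta\zeta_0 \in L^1(\Omega)$ and $\eta_\epsilon \to 0$ almost everywhere on $\Omega$.

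The main obstacle lies in analyzing the surviving boundary-layer contribution
\[
I_\epsilon = -\int_\Omega v \paren[\big]{2 \nabla\eta_\epsilon \cdot \nabla\zeta_0 + \zeta_0 \Delta\eta_\epsilon} \d x
\]
and showing $\lim_{\epsilon \to 0} I_\epsilon \ge 0$. Passing to tubular coordinates $(d, \sigma)$ near $\partial\Omega$ and using the expansions $\zeta_0(d, \sigma) = d \phi(\sigma) + O(d^2)$ and $\nabla\zeta_0 \cdot \nabla d = \phi(\sigma) + O(d)$, the substitution $t = d/\epsilon$ reveals that $I_\epsilon$ is the integral of $v$ against a kernel of the form $\phi(\sigma) \epsilon^{-1} [-2 \eta'(t) - t \eta''(t)] + O(1)$, whose $t$-integral over $[0, 2]$ equals $1$ (by integration by parts, using $\eta'(0) = \eta'(2) = 0$) but whose pointwise sign is indefinite because $\eta''$ changes sign. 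Recovering nonnegativity requires either an additional integration by parts in the normal direction to reorganize the kernel into a manifestly nonnegative form, or equivalently a Fatou-type argument identifying $\nu$ with the distributional boundary trace of $v$, so that $v \ge 0$ almost everywhere transfers to $\nu \ge 0$. This delicate reorganization is the technical core of the Brezis--Ponce inverse maximum principle \cite{BrezisPonce:2005}*{Lemma~1} that the authors invoke.
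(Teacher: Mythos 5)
Your setup — reducing to $\int_{\partial\Omega}\phi\,\d\nu \ge 0$ for nonnegative $\phi \in C^\infty(\partial\Omega)$, building a nonnegative $\zeta_0 \in C_0^\infty(\cl\Omega)$ with $\partial\zeta_0/\partial n = \phi$ and $\zeta_0 \le C d_{\partial\Omega}$, and testing against localized versions — is sensible, and your computation of the boundary-layer kernel is correct, including the observation that $-2\eta'(t)-t\eta''(t)$ integrates to $1$ but is not sign-definite. But you then stop: the final paragraph names the obstruction, speculates about ``an additional integration by parts'' or ``a Fatou-type argument'', and defers to \cite{BrezisPonce:2005}*{Lemma~1} as containing the missing step. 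That is not a proof. (The paper itself also only cites that lemma, so there is no in-paper argument to compare with; a blind attempt must nevertheless close the argument.)

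The missing idea is that a multiplicative cutoff $\zeta_0\eta_\epsilon$ is the wrong family of test functions: no smooth bump $\eta$ makes $-2\eta'-t\eta'' \ge 0$, since $2\eta'+t\eta'' = t^{-1}\frac{d}{dt}(t^2\eta')$ and $t^2\eta'$ vanishes at both endpoints of its support, forcing a sign change. What works is a \emph{concave truncation} of $\zeta_0$ rather than a cutoff by a function of $d_{\partial\Omega}$. Take $T_c : \R \to \R$ smooth, concave, nondecreasing, with $T_c(0)=0$, $T_c'(0)=1$, $0\le T_c(s)\le \min\set{s,c}$ for $s\ge 0$, and $T_c'(s)\to 0$ as $c\to 0$ for each fixed $s>0$ (for instance $T_c(s)=c(1-e^{-s/c})$); arrange $\zeta_0>0$ on $\Omega$, e.g.\ by replacing $\tilde\phi$ with $\tilde\phi+\theta$. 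Then $\zeta_c \defeq T_c(\zeta_0) \in C_0^\infty(\cl\Omega)$ is nonnegative, satisfies $\partial\zeta_c/\partial n = T_c'(0)\,\partial\zeta_0/\partial n = \phi$, obeys $0\le\zeta_c\le\zeta_0\le C d_{\partial\Omega}$, and
\[
-\Delta\zeta_c \;=\; -T_c'(\zeta_0)\,\Delta\zeta_0 \;-\; T_c''(\zeta_0)\,\abs{\nabla\zeta_0}^2.
\]
Since $T_c''\le 0$ and $v\ge 0$, the term $-\int_\Omega v\,T_c''(\zeta_0)\abs{\nabla\zeta_0}^2\d{x}$ is nonnegative and may be dropped. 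For the other term, $\abs{T_c'(\zeta_0)\Delta\zeta_0}\le\norm{\Delta\zeta_0}_{L^\infty(\Omega)}$ while $T_c'(\zeta_0)\to 0$ a.e.\ on $\Omega$, so $\int_\Omega v\,T_c'(\zeta_0)\Delta\zeta_0\d{x}\to 0$ by dominated convergence; likewise $\int_\Omega h\zeta_c\d{x}\to 0$ exactly as in your argument. Inserting $\zeta_c$ into the hypothesis and letting $c\to 0$ gives $\int_{\partial\Omega}\phi\,\d\nu\ge 0$. That favorable-sign decomposition via a concave chain rule, not a cutoff in $d_{\partial\Omega}$, is the step your proposal leaves open.
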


Given \(a \in \partial\Omega\), we denote by \(P_a\) the duality solution of \eqref{eq:boundary_value_problem} associated to the Dirac measure \(\delta_a\), that is,
\begin{equation}
\label{eq:representation_formula}
\frac{\widehat{\partial \zeta_f}}{\partial n}(a) = \int_\Omega P_a f \d{x} \quad \text{for every \(f \in L^\infty(\Omega)\).}
\end{equation}
One deduces from the definition of duality solution using \(f = \chi_{\set{P_a < 0}}\) as test function that
\[
P_a \ge 0 \quad \text{almost everywhere on \(\Omega\).}
\]
We apply this simple observation in the

\begin{proof}[Proof of \Cref{prop:characterization_sigma}]
We first assume that \(a \in \Sigma\). By \Cref{prop:duality_solutions_are_distributional_solutions}, \(P_a\) is a distributional solution of \eqref{eq:boundary_value_problem} with datum \(\delta_a - \lambda\) for some nonnegative measure \(\lambda \in \cM(\partial\Omega)\). 
Since \(P_a \ge 0\) almost everywhere on \(\Omega\), we deduce from \Cref{lem:inverse_maximum_principle} that
\[
\delta_a \ge \lambda \ge 0 \quad \text{on \(\partial\Omega\).}
\]
Thus, \(\lambda = \alpha \delta_a\) for some \(0 \le \alpha \le 1\). If we had \(\alpha \neq 1\), then \(P_a / (1 - \alpha)\) would be a distributional solution of \eqref{eq:boundary_value_problem_dirac}, in contradiction with the assumption that \(a \in \Sigma\). 
Hence, \(\alpha = 1\) and
\[
\int_\Omega P_a (- \Delta \zeta + V \zeta) \d{x} = 0 \quad \text{for every \(\zeta \in C_0^\infty(\cl\Omega)\).}
\]
Taking \(\zeta = \theta\), where \(\theta\) satisfies \eqref{eq:test_function_Dirichlet}, we deduce that
\[
\int_\Omega P_a \d{x} = 0.
\]
Since \(P_a\) is nonnegative, we have \(P_a = 0\) almost everywhere on \(\Omega\). The representation formula \eqref{eq:representation_formula} then implies that
\[
\frac{\widehat{\partial \zeta_f}}{\partial n}(a) = 0 \quad \text{for every \(f \in L^\infty(\Omega)\).}
\]

For the converse, one deduces from the assumption on \(a\) and \eqref{eq:representation_formula} applied to \(f \equiv 1\) that
\[
\int_\Omega P_a \d{x} = 0.
\]
Hence \(P_a = 0\) almost everywhere on \(\Omega\), so that \(P_a\) cannot be a distributional solution of \eqref{eq:boundary_value_problem} involving \(\delta_a\). Since \(P_a\) is the only candidate for such a solution due to \Cref{prop:distributional_solutions_are_duality_solutions}, we conclude that \(a \in \Sigma\).
\end{proof}

\section{Proof of \Cref{thm:characterization_good_measures}}
\label{sec:proof_characterization_good_measures}

(\(\Leftarrow\)). Let \(v\) be the duality solution of \eqref{eq:boundary_value_problem} associated to \(\nu\). \Cref{prop:duality_solutions_are_distributional_solutions} implies the existence of a nonnegative measure \(\lambda \in \cM(\partial\Omega)\) such that \(v\) is a distributional solution of \eqref{eq:boundary_value_problem} involving \(\nu - \lambda\). We claim that \(\lambda(\partial\Omega \setminus \Sigma) = 0\). By \Cref{prop:distributional_solutions_are_duality_solutions}, \(v\) is also a duality solution of \eqref{eq:boundary_value_problem} with datum \(\nu - \lambda\). Hence
\[
\int_{\partial\Omega} \frac{\widehat{\partial \zeta_f}}{\partial n} \d\nu = \int_\Omega v f \d{x} = \int_{\partial\Omega} \frac{\widehat{\partial \zeta_f}}{\partial n} \d{(\nu - \lambda)} \quad \text{for every \(f \in L^\infty(\Omega)\),}
\]
which implies that
\[
\int_{\partial\Omega} \frac{\widehat{\partial \zeta_f}}{\partial n} \d\lambda = 0 \quad \text{for every \(f \in L^\infty(\Omega)\).}
\]
By \Cref{prop:characterization_sigma}, we have \(\widehat{\partial \zeta_1} / \partial n > 0\) on \(\partial\Omega \setminus \Sigma\). Since \(\lambda\) is nonnegative, we conclude that \(\lambda(\partial\Omega \setminus \Sigma) = 0\) as claimed. On the other hand, since \(v \ge 0\) almost everywhere on \(\Omega\), we deduce from \Cref{lem:inverse_maximum_principle} that
\[
\nu \ge \lambda \ge 0 \quad \text{on \(\partial\Omega\).}
\]
By assumption, \(\nu(\Sigma) = 0\). Hence \(\lambda(\Sigma) = 0\). We thus have
\[
\lambda(\partial\Omega) = \lambda(\partial\Omega \setminus \Sigma) + \lambda(\Sigma) = 0,
\]
that is, \(\lambda = 0\).

(\(\Rightarrow\)). Let \(v\) be the distributional solution of \eqref{eq:boundary_value_problem} associated to \(\nu\). \Cref{prop:distributional_solutions_are_duality_solutions} implies that \(v\) is also a duality solution of \eqref{eq:boundary_value_problem} involving the same datum. By \Cref{prop:characterization_sigma}, we have
\[
\int_\Omega v f \d{x} = \int_{\partial\Omega} \frac{\widehat{\partial \zeta_f}}{\partial n} \d\nu = \int_{\partial\Omega} \frac{\widehat{\partial \zeta_f}}{\partial n} \d\nu\lfloor_{\partial\Omega \setminus \Sigma} \quad \text{for every \(f \in L^\infty(\Omega)\),}
\]
so that \(v\) is also a duality solution of \eqref{eq:boundary_value_problem} with datum \(\nu\lfloor_{\partial\Omega \setminus \Sigma}\). The reverse implication in \Cref{thm:characterization_good_measures} implies that \eqref{eq:boundary_value_problem} associated to \(\nu\lfloor_{\partial\Omega \setminus \Sigma}\) has a unique distributional solution \(z\). But then, \Cref{prop:distributional_solutions_are_duality_solutions} ensures that \(z\) is also a duality solution of \eqref{eq:boundary_value_problem} with datum \(\nu\lfloor_{\partial\Omega \setminus \Sigma}\). Since duality solutions are unique, we have \(v = z\) almost everywhere on \(\Omega\). Thus, \(v\) is a distributional solution of \eqref{eq:boundary_value_problem} with both \(\nu\) and \(\nu\lfloor_{\partial\Omega \setminus \Sigma}\), which implies that \(\nu = \nu\lfloor_{\partial\Omega \setminus \Sigma}\), and then \(\nu(\Sigma) = 0\).

\begin{remark}
\label{rem:independence_from_Vk}
Using  \Cref{thm:characterization_good_measures} and \Cref{prop:characterization_sigma}, we can now explain why \(\widehat{\partial u} / \partial n\) does not depend upon the particular choice of approximating sequence \((V_k)\) in \Cref{prop:pointwise_normal_derivative}. 
Indeed, we have by \Cref{prop:characterization_sigma} that
\[{}
\frac{\widehat{\partial u}}{\partial n}(a)
= 0 \quad \text{for every \(a \in \Sigma\),}
\]
while \(\Sigma\) is independent of \((V_{k})\).{}
When \(a \in \partial\Omega \setminus \Sigma\), it follows from \Cref{thm:characterization_good_measures} that \(P_{a}\) is a distributional solution of \eqref{eq:boundary_value_problem_dirac}, whose definition does not involve \((V_{k})\).{}
Thus, by the representation formula \eqref{eq:representation_formula}, \(\widehat{\partial u} / \partial n\) is independent of \((V_{k})\) also on \(\partial\Omega \setminus \Sigma\).
\end{remark}

\section{Proof of \Cref{thm:universal}}
\label{sec:universal}

We deduce \Cref{thm:universal} as a consequence of

\begin{proposition}
\label{prop:universal}
Let \(u\) be the solution of \eqref{eq:dirichlet_problem} for some nonnegative datum in \(L^\infty(\Omega)\) such that \(\hat{u}\) has a classical normal derivative at \(a \in \partial\Omega\) which satisfies \eqref{eq:poisson_integral}. If \(v\) is another solution of \eqref{eq:dirichlet_problem} for some nonnegative datum in \(L^\infty(\Omega)\), and if \(v \le u\) almost everywhere on \(\Omega\), then \(\hat{v}\) also has a classical normal derivative at \(a\) which satisfies \eqref{eq:poisson_integral}.
\end{proposition}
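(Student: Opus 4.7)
The plan is to derive \eqref{eq:poisson_integral} for \(v\) from the Green function representation \(\hat{v}(x) = \int_\Omega G(x, y)(-\Delta v)(y) \d y\) together with the pointwise convergence \(G(a+tn, y)/t \to K(a, y)\), using the hypothesis on \(u\) as a mechanism to produce, via Scheffé's lemma, an \(L^1\)-convergent dominant that handles the singular absorption integrand \(V v\).

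Let \(f, g \in L^\infty(\Omega)\) be the nonnegative data associated with \(u\) and \(v\), so that \(-\Delta u = f - V u\) and \(-\Delta v = g - V v\) belong to \(L^1(\Omega)\) by the absorption estimate \eqref{eq:absorption_estimate}. For bounded data the classical Poisson formula applies: the solution of \(-\Delta w = g\) with \(w = 0\) on \(\partial\Omega\) lies in \(C^1(\cl\Omega)\) by \eqref{eq:C1_estimate}, so
\[
\lim_{t \to 0^+} \int_\Omega \frac{G(a+tn, y)}{t} g(y) \d y = \int_\Omega K(a, y) g(y) \d y,
\]
and the analogue holds with \(g\) replaced by \(f\). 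Subtracting the latter from the hypothesis \eqref{eq:poisson_integral} for \(u\), I would obtain
\[
\lim_{t \to 0^+} \int_\Omega \frac{G(a+tn, y)}{t} V(y) u(y) \d y = \int_\Omega K(a, y) V(y) u(y) \d y < +\infty.
\]
Set \(H_t(y) \defeq G(a+tn, y)/t\). Since \(G(\cdot, y)\) is smooth up to \(\partial\Omega\) away from its pole and vanishes on \(\partial\Omega\), the pointwise limit \(H_t(y) \to K(a, y) = \partial G(a, y)/\partial n\) holds for every \(y \in \Omega\), and \(H_t \ge 0\). Scheffé's lemma applied to the nonnegative sequence \((H_t V u)\), whose integrals over \(\Omega\) converge to the finite integral of the pointwise limit \(K(a, \cdot) V u\), yields
\[
H_t V u \to K(a, \cdot) V u \quad \text{in \(L^1(\Omega)\) as \(t \to 0^+\).}
\]
Since \(0 \le V v \le V u\) almost everywhere on \(\Omega\) (because \(v \le u\) and \(V \ge 0\)), we have \(H_t V v \to K(a, \cdot) V v\) pointwise with \(H_t V v \le H_t V u\), and the generalized dominated convergence theorem with a variable \(L^1\)-convergent dominant gives
\[
\lim_{t \to 0^+} \int_\Omega \frac{G(a+tn, y)}{t} V(y) v(y) \d y = \int_\Omega K(a, y) V(y) v(y) \d y.
\]
Combining with the classical Poisson formula for the bounded datum \(g\), I would conclude
\[
\lim_{t \to 0^+} \frac{\hat{v}(a+tn)}{t} = \int_\Omega K(a, y)(g(y) - V(y) v(y)) \d y = \int_\Omega K(a, y)(-\Delta v)(y) \d y,
\]
which is exactly \eqref{eq:poisson_integral} for \(v\).

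The main obstacle is the absence of any boundary regularity on \(v\) that would permit a direct handling of the singular integrand \(H_t \cdot V v\). This is circumvented by exploiting the comparison \(V v \le V u\): rather than controlling the \(V v\)-integrand on its own, the hypothesis on \(u\) is first upgraded via Scheffé's lemma to \(L^1\)-convergence of the dominating sequence \(H_t V u\), which is precisely what the variable-dominant dominated convergence theorem requires. The pointwise convergence \(H_t \to K(a, \cdot)\) itself is a standard consequence of the smoothness of \(G(\cdot, y)\) at the boundary away from the pole, and the finiteness of \(\int K(a, y) V u \d y\) is the crucial quantitative gain extracted from the assumption that \eqref{eq:poisson_integral} holds for \(u\).
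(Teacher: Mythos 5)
Your proof is correct and follows essentially the same route as the paper's: decompose \(-\Delta v = g - Vv\), handle the bounded part by the classical Poisson formula, and upgrade the hypothesis on \(u\) to \(L^1\)-convergence of the dominating sequence so that the term \(\int H_t Vv\) passes to the limit by generalized dominated convergence. The only cosmetic difference is that you invoke Scheff\'e's lemma for the nonnegative sequence \(H_t Vu\), whereas the paper phrases the same step as a special case of the Brezis--Lieb lemma in \(L^1(\Omega; V\d x)\); in this nonnegative setting the two are identical.
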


We recall that whenever \(v\) satisfies \eqref{eq:dirichlet_problem} with datum \(h \in L^\infty(\Omega)\), one has
\[
\hat{v}(x) = \int_\Omega G(x, y) (h - V v)(x) \d{x} \quad \text{for every \(x \in \Omega\).}
\]
Using standard estimates on the Green function \(G\), by boundedness of \(h\) one shows that
\[
\lim_{t \downarrow 0} \int_\Omega \frac{G(a + t n, y)}{t} h(y) \d{y} = \int_\Omega K(a, y) h(y) \d{y},
\]
where \(n = n(a)\).
The main difficulty in the proof of \Cref{prop:universal} thus consists in getting that
\[
\lim_{t \downarrow 0} \int_\Omega \frac{G(a + t n, y)}{t} V v(y) \d{y} = \int_\Omega K(a, y) V v(y) \d{y}.
\]

\begin{proof}[Proof of \Cref{prop:universal}]
Let \((\varepsilon_k)\) be a non-increasing sequence of positive numbers converging to \(0\). We define on \(\Omega\)
\[
g_k(y) = \frac{G(a + \varepsilon_k n, y)}{\varepsilon_k}.
\]
On the one hand, we have \(g_k(y) \to K(a, y)\) for all \(y \in \Omega\). On the other hand, by assumption on \(u\),
\[
\int_\Omega g_k(y) V u(y) \d{y} \to \int_\Omega K(a, y) V u(y) \d{y}
\]
or, equivalently,
\[
\norm{g_k u}_{L^1(\Omega; V \d{x})} \to \norm{K(a, .) u}_{L^1(\Omega; V \d{x})}.
\]
We thus have pointwise convergence of \((g_k u)\) and also convergence of norms. Therefore, 
\[{}
g_k u \to K(a, .) u
\quad \text{in \(L^1(\Omega; V \d{x})\),}
\]
which is a special case of the Brezis--Lieb lemma~\cite{BrezisLieb:1983}; see \cite{Willem:2013}*{Proposition~4.2.6}. Since \(0 \le v \le u\), we deduce from Lebesgue's dominated convergence theorem that \(g_k v \to K(a, .) v\) in \(L^1(\Omega; V \d{x})\). The conclusion is now straightforward.
\end{proof}

For the proof of \Cref{thm:universal}, we also need the following

\begin{lemma}
\label{lem:integral_ineq}
Let \(u\) be the solution of \eqref{eq:dirichlet_problem} for some nonnegative datum \(f \in L^\infty(\Omega)\). Then, for every \(a \in \partial\Omega\), we have
\[
\limsup_{\varepsilon \downarrow 0} \frac{\hat{u}(a + \varepsilon n)}{\varepsilon}
\le \frac{\widehat{\partial u}}{\partial n}(a) 
\le \int_\Omega K(a, y)(f - V u)(y) \d{y}.
\]
\end{lemma}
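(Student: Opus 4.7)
The plan is to exploit the nondecreasing approximation \((V_k)\) underlying \Cref{prop:pointwise_normal_derivative}, together with the comparison principle and the classical Poisson representation available for bounded potentials. Let \(u_k\) denote the solution of \eqref{eq:dirichle_proposition_pointwise}; by assertion~\emph{(iii)} of that proposition, \(\partial u_k/\partial n(a) \to \widehat{\partial u}/\partial n(a)\). Since \(V_k \le V\) almost everywhere, \Cref{lem:comparison_principle} applied to \(u_k\) and \(u\) yields \(u_k \ge u\) almost everywhere on \(\Omega\); as \(u_k \in C^1(\cl\Omega)\) is its own precise representative, averaging gives the pointwise bound \(\hat{u}(x) \le u_k(x)\) at every \(x \in \Omega\).

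For the first inequality, I use that \(u_k \in C^1(\cl\Omega)\) with \(u_k = 0\) on \(\partial\Omega\), so
\[
\lim_{\varepsilon \downarrow 0} \frac{u_k(a + \varepsilon n)}{\varepsilon} = \frac{\partial u_k}{\partial n}(a).
\]
Combining with \(\hat{u}(a + \varepsilon n) \le u_k(a + \varepsilon n)\) for \(\varepsilon\) small enough that \(a + \varepsilon n \in \Omega\), I obtain
\[
\limsup_{\varepsilon \downarrow 0} \frac{\hat{u}(a + \varepsilon n)}{\varepsilon} \le \frac{\partial u_k}{\partial n}(a).
\]
Letting \(k \to \infty\) gives the desired upper bound by \(\widehat{\partial u}/\partial n(a)\).

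For the second inequality, since \(-\Delta u_k = f - V_k u_k \in L^\infty(\Omega)\) the function \(u_k\) satisfies the Green representation \(u_k(x) = \int_\Omega G(x,y)(f - V_k u_k)(y) \d{y}\). Standard boundary estimates on \(G\) -- namely the pointwise limit \(G(a + \varepsilon n, y)/\varepsilon \to K(a, y)\) together with a dominating bound of the form \(G(a + \varepsilon n, y)/\varepsilon \le C\, K(a, y)\) that holds uniformly for small \(\varepsilon\) -- allow one to differentiate under the integral and deduce, by dominated convergence, that
\[
\frac{\partial u_k}{\partial n}(a) = \int_\Omega K(a, y)(f - V_k u_k)(y) \d{y}.
\]
By \Cref{prop:pointwise_normal_derivative}\emph{(i)}, \(V_k u_k \to V u\) almost everywhere on \(\Omega\), and each \(V_k u_k\) is nonnegative. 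Since \(K(a, \cdot) \ge 0\), Fatou's lemma gives
\[
\liminf_{k \to \infty} \int_\Omega K(a, y) V_k u_k(y) \d{y} \ge \int_\Omega K(a, y) V u(y) \d{y}.
\]
The integral \(\int_\Omega K(a, y) f(y) \d{y}\) is independent of \(k\) and finite since \(f \in L^\infty(\Omega)\) and \(K(a, \cdot) \in L^1(\Omega)\), so passing to the limit yields
\[
\frac{\widehat{\partial u}}{\partial n}(a) = \lim_{k \to \infty} \frac{\partial u_k}{\partial n}(a) \le \int_\Omega K(a, y)(f - V u)(y) \d{y}.
\]

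The delicate step is the Poisson representation for \(u_k\) in the bounded-\(V_k\) case, which relies on the sharp comparison between the Green function and the Poisson kernel near the boundary; this is standard but is where the dependence on boundedness of the datum \(-\Delta u_k\) is crucially used. Note that only Fatou (rather than dominated convergence) is available for the \(V_k u_k\)-term, because \(K(a, \cdot)\) is singular at \(a\) and no a priori integrability of \(V u\) against \(K(a, \cdot) \d{y}\) is assumed; this loss of information is precisely what makes the lemma an inequality rather than an equality and underlies the distinction between the points of \(\cN\) and the remaining boundary points.
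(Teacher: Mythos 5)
Your proof is correct and follows essentially the same route as the paper: both proofs bound \(\hat{u}\) by the \(C^1\) approximants \(u_k\) via the comparison principle to obtain the first inequality, then combine the Poisson representation \(\partial u_k/\partial n(a) = \int_\Omega K(a,y)(f - V_k u_k)\,\d y\) with Fatou's lemma applied to \(V_k u_k \to Vu\) for the second. The only difference is presentational: you spell out the differentiation under the integral sign and the rationale for using Fatou rather than dominated convergence, which the paper leaves implicit.
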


\begin{proof}[Proof of \Cref{lem:integral_ineq}]
Let \(u_k\) satisfy \eqref{eq:dirichle_proposition_pointwise}. 
By comparison, we have \(\hat{u} \le u_k\) on \(\Omega\).{}
Hence,
\[{}
\limsup_{\varepsilon \downarrow 0} \frac{\hat{u}(a + \varepsilon n)}{\varepsilon}
\le \frac{\partial u_{k}}{\partial n}(a) 
\quad \text{for every \(a \in \partial\Omega\).}
\]
Then, as \(k \to \infty\) we deduce the first inequality in the statement.
 
Next, since \(\Delta u_k\) is bounded, for every \(a \in \partial\Omega\) we have
\[
\frac{\partial u_k}{\partial n}(a) = \int_\Omega K(a, y)(- \Delta u_k(y)) \d{y} = \int_\Omega K(a, y)(f - V_k u_k)(y) \d{y}.
\]
By \Cref{prop:pointwise_normal_derivative}, the sequence \((V_k u_k)\) converges almost everywhere to \(V u\). Fatou's lemma implies that, for every \(a \in \partial\Omega\),
\[
\int_\Omega K(a, y) V u(y) \d{y} 
\le \liminf_{k \to \infty} \int_\Omega K(a, y) V_k u_k(y) \d{y}.
\]
Observe that the right-hand side is finite since the sequence \((\partial u_k / \partial n)\) is bounded. Hence, for every \(a \in \partial\Omega\), we have
\[
\begin{aligned}
\frac{\widehat{\partial u}}{\partial n}(a) 
= \lim_{k \to \infty} \frac{\partial u_k}{\partial n}(a) 
&= \lim_{k \to \infty} \int_\Omega K(a, y)(f - V_k u_k)(y) \d{y} \\
&\le \int_\Omega K(a, y)(f - V u)(y) \d{y},
\end{aligned}
\]
which implies the second inequality in the statement.
\end{proof}

\begin{proof}[Proof of \Cref{thm:universal}]
The reverse implication (\(\Leftarrow\)) follows from \Cref{prop:universal} and the fact that \(u \le \zeta_{\norm{f}_{L^\infty(\Omega)}} = \norm{f}_{L^\infty(\Omega)} \zeta_1\) almost everywhere on \(\Omega\). We now prove the direct implication (\(\Rightarrow\)). 
One shows the existence of a constant \(C > 0\) such that, for every \(\varepsilon > 0\), the solution \(v_\varepsilon\) of the Dirichlet problem
\[
\begin{dcases}
\begin{aligned}
- \Delta v_\varepsilon + V v_\varepsilon &= \chi_{\set{u / C > \varepsilon}} && \text{in \(\Omega\),} \\
v_\varepsilon &= 0 && \text{on \(\partial\Omega\),}
\end{aligned}
\end{dcases}
\]
satisfies \(v_\varepsilon \le u / \varepsilon\) almost everywhere on \(\Omega\)\,; this is a consequence of Kato's inequality, as explained in the proof of Proposition~4.1 in \cite{OrsinaPonce:2018}. Let \((\varepsilon_j)\) be a non-increasing sequence of positive numbers converging to \(0\). 
Since \(\chi_{\set{u / C > \varepsilon_{j}}} \le 1\), by \Cref{prop:universal} the function \(\widehat{v_{\varepsilon_j}}\) has a normal derivative at \(a\) which satisfies \eqref{eq:poisson_integral}, which means that
\[{}
\frac{\partial \widehat{v_{\varepsilon_j}}}{\partial n}(a) = \int_\Omega K(a, y) (\chi_{\set{u / C > \varepsilon_j}} - V v_{\varepsilon_j}) (y) \d{y}.
\]
By the strong maximum principle for the Schrödinger operator with potential in \(L^{1}_{\loc}(\Omega)\), we have \(u > 0\) almost everywhere on \(\Omega\). Hence
\[{}
\chi_{\set{u / C > \varepsilon_{j}}} \to 1
\quad \text{almost everywhere on \(\Omega\).}
\]
The convergence thus holds in \(L^{1}(\Omega)\), which implies that
\[{}
v_{\varepsilon_j} \to \zeta_1{}
\quad \text{in \(L^{1}(\Omega)\).}
\]
As the sequence \((v_{\varepsilon_j})\) is nondecreasing, we deduce from Levi's monotone convergence theorem that
\[
\lim_{j \to \infty} \frac{\partial \widehat{v_{\varepsilon_j}}}{\partial n}(a) = \int_{\Omega} K(a, y) (1  - V\zeta_1(y)) \d{y}.
\]
Since \(\widehat{v_{\varepsilon_j}} \le \widehat{\zeta_1}\) on \(\Omega\), we also have, by classical comparison of limits,
\[
\lim_{j \to \infty} \frac{\partial \widehat{v_{\varepsilon_j}}}{\partial n}(a) \le \liminf_{j \to \infty} \frac{\widehat{\zeta_1}(a + \varepsilon_j n)}{\varepsilon_j}.
\]
\Cref{lem:integral_ineq} implies that
\[
\liminf_{k \to \infty} \frac{\widehat{\zeta_1}(a + \varepsilon_k n)}{\varepsilon_k} \le \limsup_{k \to \infty} \frac{\widehat{\zeta_1}(a + \varepsilon_k n)}{\varepsilon_k} \le \int_\Omega K(a, y) (1 - V \zeta_1(y)) \d{y}.
\]
Combining the inequalities above, we deduce that \(\partial \widehat{\zeta_1}(a) / \partial n\) exists and
\[
\frac{\partial \widehat{\zeta_1}}{\partial n}(a) = \lim_{k \to \infty} \frac{\partial \widehat{v_{\varepsilon_k}}}{\partial n}(a) = \int_{\Omega} K(a, y) (1  - V\zeta_1(y)) \d{y}.
\]
Hence, by definition, \(a \in \cN\).
\end{proof}

\section{Proof of \Cref{thm:hopf}}
\label{sec:proof_hopf}

We first prove a version of the Hopf lemma in terms of the pointwise normal derivative associated to the Schr\"odinger operator \(- \Delta + V\).{}
In this case, the answer does not involve the set \(\cN\).

\begin{proposition}
\label{prop:hopf_schrodinger}
Let \(u\) be the solution of \eqref{eq:dirichlet_problem} for some nonnegative datum \(f \in L^\infty(\Omega)\), \(f \not\equiv 0\). Then, for every \(a \in \partial\Omega\), we have
\[
\frac{\widehat{\partial u}}{\partial n}(a) > 0 \quad \text{if and only if} \quad a \not\in \Sigma.
\]
\end{proposition}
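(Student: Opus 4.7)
The plan is to deduce the proposition from the machinery developed in \Cref{sec:boundary_value_problem,sec:normal_derivative_on_sigma}, namely \Cref{prop:characterization_sigma}, \Cref{thm:characterization_good_measures} and the representation formula \eqref{eq:representation_formula}. Since $u$ is precisely the function called $\zeta_{f}$ in \Cref{sec:boundary_value_problem}, we have $\widehat{\partial u}/\partial n(a) = \widehat{\partial \zeta_{f}}/\partial n(a)$ at every boundary point, which lets me translate the problem back into a statement about $P_a$.

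The direct implication is a one-line contrapositive: if $a \in \Sigma$, then \Cref{prop:characterization_sigma} forces $\widehat{\partial \zeta_{h}}/\partial n(a) = 0$ for every $h \in L^{\infty}(\Omega)$, which applied to $h = f$ contradicts the assumption $\widehat{\partial u}/\partial n(a) > 0$.

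For the reverse implication I would proceed in three steps. First, apply \Cref{thm:characterization_good_measures} with $\nu = \delta_{a}$: the hypothesis $a \notin \Sigma$ reads $\delta_{a}(\Sigma) = 0$ and therefore yields a distributional solution of \eqref{eq:boundary_value_problem_dirac}, which by uniqueness of duality solutions (\Cref{prop:distributional_solutions_are_duality_solutions}) must coincide with $P_{a}$. Second, observe that $P_{a} \not\equiv 0$ on $\Omega$: otherwise \eqref{eq:representation_formula} would force $\widehat{\partial \zeta_{h}}/\partial n(a) = 0$ for every $h \in L^{\infty}(\Omega)$, placing $a$ into $\Sigma$ by \Cref{prop:characterization_sigma}. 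Third, invoke the strong maximum principle for the Schr\"odinger operator with $L^{1}_{\loc}$ potential (see \cite{Ancona:1979}, \cite{BrezisPonce:2003}, \cite{Trudinger:1978}) to promote $P_{a} \ge 0$, $P_{a} \not\equiv 0$ into $P_{a} > 0$ almost everywhere on $\Omega$. Combined with the assumption $f \ge 0$, $f \not\equiv 0$, the representation formula \eqref{eq:representation_formula} then yields
\[
\frac{\widehat{\partial u}}{\partial n}(a) = \int_{\Omega} P_{a} f \d{x} > 0.
\]

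The only non-cosmetic step is the third one, that is, the strict positivity of $P_{a}$ on $\Omega$. Since the strong maximum principle invoked there is precisely the tool already used in the paper to motivate the very definition of $\Sigma$ (see the discussion following \eqref{eq:representation_smooth_functions}), I do not expect any genuine obstacle; everything else reduces to bookkeeping between distributional and duality solutions.
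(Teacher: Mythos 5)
Your proof is correct and follows essentially the same route as the paper: reduce via \Cref{prop:characterization_sigma} to showing strict positivity off \(\Sigma\), identify \(P_a\) as a nontrivial distributional solution when \(a \notin \Sigma\), invoke the strong maximum principle to get \(P_a > 0\) a.e., and conclude from the representation formula \eqref{eq:representation_formula}. The only cosmetic difference is that you make the nonvanishing of \(P_a\) an explicit intermediate step (which the paper leaves implicit in applying the strong maximum principle) and you route through \Cref{thm:characterization_good_measures} where the definition of \(\Sigma\) together with \Cref{prop:distributional_solutions_are_duality_solutions} would already suffice.
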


\begin{proof}
By \Cref{prop:characterization_sigma}, we only have to prove that \(\widehat{\partial u} / \partial n > 0\) on \(\partial\Omega \setminus \Sigma\). 
For this purpose, let \(a \in \partial\Omega \setminus \Sigma\).{}
In this case, \(P_a\) is both a duality and a distributional solution of \eqref{eq:boundary_value_problem} involving \(\delta_a\).{}
As a distributional solution, it satisfies the strong maximum principle for the Schr\"odinger operator with potential in \(L^{1}_{\loc}(\Omega)\). 
Hence, 
\begin{equation}
	\label{eq:Pa_positive}
P_a > 0
\quad \text{almost everywhere on \(\Omega\).}
\end{equation}
As a duality solution, \(P_a\) satisfies the representation formula \eqref{eq:representation_formula}.
Since \(f\) is nonzero, we then deduce from this formula and \eqref{eq:Pa_positive} that
\[
\frac{\widehat{\partial u}}{\partial n}(a) > 0.\qedhere
\]
\end{proof}

We now turn to the

\begin{proof}[Proof of \Cref{thm:hopf}]
By \Cref{thm:universal}, the classical normal derivative \(\partial \hat{u}/\partial n\) exists on \(\cN\) and satisfies \eqref{eq:poisson_integral}.	
A direct application of \Cref{lem:integral_ineq} gives, for every \(a \in \cN\),{}
\[
\frac{\partial \hat{u}}{\partial n}(a)
\le \frac{\widehat{\partial u}}{\partial n}(a) 
\le \int_\Omega K(a, y)(f - V u)(y) \d{y}.
\]
As the integral in the right-hand side equals \(\partial \hat{u}(a)/\partial n\), equality holds everywhere and we get
\[{}
\frac{\partial \hat{u}}{\partial n}
= \frac{\widehat{\partial u}}{\partial n}
\quad \text{on \(\cN\).}
\]
The theorem then follows from \Cref{prop:hopf_schrodinger}.
\end{proof}

We conclude this section with the following particular case of \Cref{thm:hopf}.

\begin{corollary}
\label{cor:hopf_lemma_supercritical}
Assume that \(V \in L^q(\Omega)\) for some \(q > N\). Then, for every solution \(u\) of \eqref{eq:dirichlet_problem} involving a nonnegative datum \(f \in L^\infty(\Omega)\), \(f \not\equiv 0\), the normal derivative of \(\hat{u}\) exists at every point \(a \in \partial\Omega\) and satisfies
\[
\frac{\partial \hat{u}}{\partial n}(a) > 0.
\]
\end{corollary}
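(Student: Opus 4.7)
The plan is to combine \Cref{thm:hopf} with two facts specific to the regime \(V \in L^q(\Omega)\), \(q > N\): namely, \(\Sigma = \emptyset\) and \(\cN = \partial\Omega\). Once these are established, \Cref{thm:hopf} immediately yields \(\partial\hat{u}(a)/\partial n > 0\) for every \(a \in \partial\Omega\).

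The identity \(\Sigma = \emptyset\) is already contained in \Cref{prop:existence_bounded_potential}, so no extra work is required there.

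The identification \(\cN = \partial\Omega\) is the substantive step. First, I would show that \(\zeta_1 \in C^1(\cl\Omega)\). By the weak maximum principle (\Cref{lem:weak_maximum_principle}), \(0 \le \zeta_1 \le w\) almost everywhere on \(\Omega\), where \(w \in C^\infty(\cl\Omega)\) solves \(-\Delta w = 1\) with zero boundary data; in particular \(\zeta_1 \in L^\infty(\Omega)\). Since \(V \in L^q(\Omega)\), we then have \(V\zeta_1 \in L^q(\Omega)\) and \(-\Delta\zeta_1 = 1 - V\zeta_1 \in L^q(\Omega)\). Calderón--Zygmund theory (\cite{GilbargTrudinger:2001}*{Theorem~9.15}) gives \(\zeta_1 \in W^{2, q}(\Omega)\), and since \(q > N\) the Morrey--Sobolev embedding yields \(\zeta_1 \in C^1(\cl\Omega)\). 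In particular, the precise representative \(\widehat{\zeta_1}\) coincides with \(\zeta_1\) everywhere and the classical normal derivative \(\partial\zeta_1/\partial n\) exists at every \(a \in \partial\Omega\).

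It remains to verify \eqref{eq:poisson_integral} for \(\zeta_1\) at every \(a \in \partial\Omega\). I start from the Green representation \(\zeta_1(x) = \int_\Omega G(x, y) (-\Delta\zeta_1(y)) \d{y}\), divide by \(t\) with \(x = a + tn\), and let \(t \downarrow 0\). The integrand converges pointwise to \(K(a, y)(-\Delta\zeta_1(y))\) by definition of \(K\). To pass the limit inside via Lebesgue dominated convergence, I would use the standard estimate \(G(a+tn, y)/t \le C\, d_{\partial\Omega}(y)/\abs{a - y}^N\), which gives a pointwise majorant of the form \(C\, d_{\partial\Omega}(y) \abs{a-y}^{-N} \abs{\Delta\zeta_1(y)}\). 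Hölder's inequality makes this integrable because \(-\Delta\zeta_1 \in L^q(\Omega)\) while \(d_{\partial\Omega}(\cdot)/\abs{a - \cdot}^N \in L^{q'}(\Omega)\), a condition that holds precisely because \(q > N\) is equivalent to \(q' < N/(N-1)\). Thus \(a \in \cN\) for every \(a \in \partial\Omega\), and the corollary follows from \Cref{thm:hopf}. The main technical point is this dominated convergence step; the rest is regularity theory and a direct appeal to \Cref{thm:hopf}.
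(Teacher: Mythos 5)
Your proposal is correct and follows essentially the same route as the paper: \(\Sigma = \emptyset\) via \Cref{prop:existence_bounded_potential}, \(\zeta_1 \in W^{2,q}(\Omega) \subset C^1(\cl\Omega)\) by Calder\'on--Zygmund regularity and Morrey--Sobolev embedding, and then \Cref{thm:hopf}. The one place where you go further than the paper is the explicit verification of the Poisson representation \eqref{eq:poisson_integral} for \(\zeta_1\): the paper simply asserts that \(u \in C^1(\cl\Omega)\) ``gives \(\cN = \partial\Omega\)'', leaving the dominated-convergence step implicit (it is flagged in the introduction as following ``from standard estimates on \(G\)''), while you supply the Green-function majorant \(G(a+tn,y)/t \le C\, d_{\partial\Omega}(y)/\abs{a-y}^N\) and the H\"older pairing with \(-\Delta\zeta_1 \in L^q(\Omega)\), using that \(q > N\) is equivalent to \(q' < N/(N-1)\). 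That added detail is correct and makes the argument self-contained, but it is a filled-in gap rather than a different method.
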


\begin{proof}
We prove that \(\cN = \partial\Omega\) and \(\Sigma = \emptyset\). Let \(u\) be the solution of \eqref{eq:dirichlet_problem} involving some nonnegative datum \(f \in L^\infty(\Omega)\). 
Since \(u \in L^\infty(\Omega)\), we have \(\Delta u \in L^q(\Omega)\), and then \(u \in W^{2, q}(\Omega)\). 
The Morrey--Sobolev embedding theorem ensures that \(u \in C^1(\cl\Omega)\), which gives \(\cN = \partial\Omega\). 
That \(\Sigma = \emptyset\) follows from \Cref{prop:existence_bounded_potential}.
We then have the conclusion using \Cref{thm:hopf}.
\end{proof}

\section*{Acknowledgements}

The first author (A.C.P.) was supported by the Fonds de la Recherche scientifique (F.R.S.--FNRS) under research grant  J.0020.18.

\begin{bibdiv}
\begin{biblist}

\bib{Ancona:1979}{article}{
   author={Ancona, Alano},
   title={Une propri\'et\'e d'invariance des ensembles absorbants par
   perturbation d'un op\'erateur elliptique},
   journal={Comm. Partial Differential Equations},
   volume={4},
   date={1979},
   number={4},
   pages={321--337},
}

\bib{Ancona:2012}{article}{
   author={Ancona, Alano},
   title={Positive solutions of Schr\"{o}dinger equations and fine regularity of
boundary points},
   journal={Math. Z.},
   volume={272},
   date={2012},
   number={1-2},
   pages={405--427},
}

\bib{BrezisLieb:1983}{article}{
   author={Brezis, Ha\"{\i}m},
   author={Lieb, Elliott},
   title={A relation between pointwise convergence of functions and
   convergence of functionals},
   journal={Proc. Amer. Math. Soc.},
   volume={88},
   date={1983},
   number={3},
   pages={486--490},
}

\bib{BrezisMarcusPonce:2007}{article}{
   author={Brezis, Ha\"im},
   author={Marcus, Moshe},
   author={Ponce, Augusto C.},
   title={Nonlinear elliptic equations with measures revisited},
   conference={
      title={Mathematical aspects of nonlinear dispersive equations},
   },
   book={
      series={Ann. of Math. Stud.},
      volume={163},
      publisher={Princeton Univ. Press, Princeton, NJ},
   },
   date={2007},
   pages={55--109},
}

\bib{BrezisPonce:2003}{article}{
   author={Brezis, Ha\"im},
   author={Ponce, Augusto C.},
   title={Remarks on the strong maximum principle},
   journal={Differential Integral Equations},
   volume={16},
   date={2003},
   number={1},
   pages={1--12},
}

\bib{BrezisPonce:2005}{article}{
   author={Brezis, Ha\"im},
   author={Ponce, Augusto C.},
   title={Reduced measures on the boundary},
   journal={J. Funct. Anal.},
   volume={229},
   date={2005},
   number={1},
   pages={95--120},
}

\bib{BrezisPonce:2008}{article}{
   author={Brezis, Ha\"im},
   author={Ponce, Augusto C.},
   title={Kato's inequality up to the boundary},
   journal={Commun. Contemp. Math.},
   volume={10},
   date={2008},
   number={6},
   pages={1217--1241},
}

\bib{Diaz:2017}{article}{
   author={D\'{\i}az, Jes\'{u}s Ildefonso},
   title={On the ambiguous treatment of the Schr\"{o}dinger equation for the infinite potential well and an alternative via singular potentials: the multi-dimensional case},
   journal={SeMA J.},
   volume={74},
   date={2017},
   number={3},
   pages={255--278},
   note={Erratum: SeMA J. {\bfseries 75}, no. 3, 563--568},
}

\bib{DiazRakotoson:2009}{article}{
   author={D\'{\i}az, Jes\'{u}s Ildefonso},
   author={Rakotoson, Jean-Michel},
   title={On the differentiability of very weak solutions with right-hand
side data integrable with respect to the distance to the boundary},
   journal={J. Funct. Anal.},
   volume={257},
   date={2009},
   number={3},
   pages={807--831},
}

\bib{Evans:2010}{book}{
   author={Evans, Lawrence C.},
   title={Partial differential equations},
   series={Graduate Studies in Mathematics},
   volume={19},
   edition={2},
   publisher={American Mathematical Society, Providence, RI},
   date={2010},
}

\bib{GilbargTrudinger:2001}{book}{
   author={Gilbarg, David},
   author={Trudinger, Neil S.},
   title={Elliptic partial differential equations of second order},
   series={Classics in Mathematics},
   publisher={Springer-Verlag, Berlin},
   date={2001},
}

\bib{LittmanStampacchiaWeinberger:1963}{article}{
   author={Littman, Walter},
   author={Stampacchia, Guido},
   author={Weinberger, Hans F.},
   title={Regular points for elliptic equations with discontinuous
   coefficients},
   journal={Ann. Scuola Norm. Sup. Pisa (3)},
   volume={17},
   date={1963},
   pages={43--77},
}

\bib{MalusaOrsina:1996}{article}{
   author={Malusa, Annalisa},
   author={Orsina, Luigi},
   title={Existence and regularity results for relaxed Dirichlet problems
   with measure data},
   journal={Ann. Mat. Pura Appl. (4)},
   volume={170},
   date={1996},
   pages={57--87},
}

\bib{MarcusVeron:1998}{article}{
   author={Marcus, Moshe},
   author={V\'eron, Laurent},
   title={The boundary trace of positive solutions of semilinear elliptic
equations: the supercritical case},
   journal={J. Math. Pures Appl. (9)},
   volume={77},
   date={1998},
   number={5},
   pages={481--524},
}

\bib{MarcusVeron:2014}{book}{
   author={Marcus, Moshe},
   author={V\'eron, Laurent},
   title={Nonlinear second order elliptic equations involving measures},
   series={De Gruyter Series in Nonlinear Analysis and Applications},
   volume={21},
   publisher={De Gruyter, Berlin},
   date={2014},
}

\bib{OrsinaPonce:2018}{article}{
   author={Orsina, Luigi},
   author={Ponce, Augusto C.},
   title={Hopf potentials for the Schr\"{o}dinger operator},
   journal={Anal. PDE},
   volume={11},
   date={2018},
   number={8},
   pages={2015--2047},
}

\bib{OrsinaPonce:2019}{article}{
   author={Orsina, Luigi},
   author={Ponce, Augusto C.},
   title={On the nonexistence of Green's function and failure of the strong maximum principle},
   note={To appear in J. Math. Pures Appl.},
}

\bib{Ponce:2016}{book}{
   author={Ponce, Augusto C.},
   title={Elliptic PDEs, measures and capacities. From the Poisson equations to nonlinear Thomas-Fermi problems},
   series={EMS Tracts in Mathematics},
   volume={23},
   publisher={European Mathematical Society (EMS), Z\"urich},
   date={2016},
}

\bib{Trudinger:1978}{article}{
   author={Trudinger, Neil S.},
   title={On the positivity of weak supersolutions of nonuniformly elliptic equations},
   journal={Bull. Austral. Math. Soc.},
   volume={19},
   date={1978},
   number={3},
   pages={321--324},
}

\bib{VeronYarur:2012}{article}{
   author={V\'eron, Laurent},
   author={Yarur, Cecilia},
   title={Boundary value problems with measures for elliptic equations with
   singular potentials},
   journal={J. Funct. Anal.},
   volume={262},
   date={2012},
   number={3},
   pages={733--772},
}

\bib{Willem:2013}{book}{
   author={Willem, Michel},
   title={Functional analysis. Fundamentals and applications},
   series={Cornerstones},
   publisher={Birkh\"auser/Springer, New York},
   date={2013},
}

\end{biblist}
\end{bibdiv}

\end{document}